\def\be{\begin{equation}}
\def\ee{\end{equation}}
\def\bse{\begin{subequations}}
\def\ese{\end{subequations}}
\newtheorem{thm}{Theorem}
\newtheorem{lem}{Lemma}[section]
\def\bse{\begin{subequations}}
\def\ese{\end{subequations}}
\def\XXint#1#2#3{{
\setbox0=\hbox{$#1{#2#3}{\int}$}
\vcenter{\hbox{$#2#3$}}\kern-.5\wd0}}
\title{Almost collapse mass quantization in 2D Smoluchowski-Poisson equation}
\author{Takashi Suzuki} 
\date{\today}
\begin{document}
\let\cleardoublepage\clearpage

\maketitle

\begin{abstract}
We study Smoluchowski-Poisson equation in two space dimensions provided with Dirichlet boundary condition for the Poisson part.  For this equation several profiles of blowup solution have been noticed.  Here we show collapse mass quantization with possible vanishing term. 
\end{abstract}

\section{Introduction}\label{sec:1}

We study parabolic-elliptic system composed of the Smoluchowski part 
\begin{equation}
u_t=\Delta u-\nabla \cdot u\nabla v \quad \mbox{in $\Omega\times(0,T)$} 
 \label{s}
\end{equation} 
with null-flux boundary condition 
\begin{equation}
\frac{\partial u}{\partial \nu}-u\frac{\partial v}{\partial\nu}=0 \quad \mbox{on $\partial\Omega \times (0,T)$} 
 \label{eqn:s3} 
\end{equation}
and the Poisson part in the form of 
\begin{equation}
-\Delta v=u, \quad \left. v\right\vert_{\partial\Omega}=0, 
 \label{eqn:s4} 
\end{equation}
where $\Omega\subset {\bf R}^2$ is a bounded domain with smooth boundary $\partial\Omega$ and $\nu$ is the outer unit normal vector.  Initial condition is given as 
\begin{equation} 
\left. u\right\vert_{t=0}=u_{0}(x)\geq 0 \quad \mbox{in $\Omega$}, 
 \label{eqn:s2}
\end{equation} 
where $u_0=u_0(x)$ is a smooth function.  

This model is proposed in statistical physics to describe the motion of mean field of many self-gravitating Brownian particles \cite{sc02}.  In a related case the Poisson part is provided with the Neumann boundary condition such as 
\begin{equation}
-\Delta v=u-\frac{1}{\vert\Omega\vert}\int_\Omega u, \quad \left. \frac{\partial v}{\partial \nu}\right\vert_{\partial\Omega}=0, \quad \int_\Omega v=0. 
 \label{eqn:sd} 
\end{equation}
Concerning (\ref{s})-(\ref{eqn:s3}), (\ref{eqn:s2}), and (\ref{eqn:sd}), there is a threshold of $\Vert u_0\Vert_1=\lambda$ for the blowup of the solution.  More precisely, if $\lambda<4\pi$ the solution exists global-in-time \cite{bil98, gz98, nsy97}.  If a local mass greater than $4\pi$ is concentrated on a boundary point, on the contrary, there arises blowup in finite time \cite{nag01, ss01b}. Underlying blowup mechanisms were also suspected from the study of stationary solutions \cite{cp81}. This attempt was followed by \cite{hv96, ss00}, using radially symmetric and general stationary solutions.  Up to now several properties have been known.  First, formation of collapses arises.  This means that the measure $u(x,t)dx$ is continuously extended up to the blowup time $t=T$ with its singular part composed of finite sum of delta functions \cite{ss01a}.  Next, quantization of the coefficients of these delta functions, called collapse masses, is assocaited with the formation of sub-collapses and type II blowup rate \cite{sen07, ns08}.  In this case total blowup mechanism is included in infinitesimally small parabolic region of space and time around $(x_0,T)$, where $x_0$ and $T$ denote the blowup point and time, respectively.  There is also a study on a multi-component system with chemotactic competitions  \cite{ess12}.  System (\ref{s})-(\ref{eqn:s2}), provided with Dirichlet condition for the Poisson part, is studied in \cite{s13} which excludes boundary blowup points.  

In contrast with (\ref{s})-(\ref{eqn:s3}), (\ref{eqn:s2}), and (\ref{eqn:sd}), the model (\ref{s})-(\ref{eqn:s2}) is hard in controlling the boundary blowup points, while its stationary state is equivalent to the mean field equation 
\begin{equation} 
-\Delta v=\frac{\lambda e^v}{\int_\Omega e^v}, \quad \left. v\right\vert_{\partial\Omega}=0 
 \label{eqn:mfe}
\end{equation}
where $\lambda=\Vert u_0\Vert_1$.  Equation (\ref{eqn:mfe}) arises also in statistical mechanics concerning point vortices \cite{ons49, jm73, pl76}, and the structure of the solution set has been studied in connection with the shape of $\Omega$ since \cite{ns90} (see, for example, \cite{gos11} and the references therein).  

To describe the connection between (\ref{s})-(\ref{eqn:s2}) and (\ref{eqn:mfe}), we confirm several fundamental features of the former.  First, local-in-time unique existence of the classical solution is standard, given smooth initial value $u_0=u_0(x)\geq 0$.  Henceforth, $T\in (0, +\infty]$ denotes its maximal existence time. If $u_0\not\equiv 0$, which we always assume below, the strong maximum principle and the Hopf lemma guarantee $u(\cdot,t)>0$ on $\overline{\Omega}$ for $t>0$.  Actually, system (\ref{s})-(\ref{eqn:s2}) is subject to thermodynamical laws, {\it total mass conservation} and {\it free energy decreasing}, 
\begin{eqnarray} 
& & \frac{d}{dt}\int_\Omega u=\int_\Omega \nabla\cdot (\nabla u-u\nabla v)=\int_{\partial\Omega}\frac{\partial u}{\partial \nu}-u\frac{\partial v}{\partial\nu} \ ds=0 
 \label{eqn:tmc} \\ 
& & \frac{d}{dt}{\cal F}(u)=-\int_\Omega u\vert \nabla (\log u-v)\vert^2\leq 0  
 \label{eqn:fed}
\end{eqnarray} 
where $ds$ denotes the surface element and 
\begin{equation} 
{\cal F}(u)=\int_\Omega u(\log u-1)-\frac{1}{2}\langle (-\Delta)^{-1}u,u\rangle 
 \label{eqn:free-e}
\end{equation} 
with $v=(-\Delta)^{-1}u$ standing for (\ref{eqn:s4}).  To see the reason why (\ref{eqn:mfe}) represents the stationary state of (\ref{s})-(\ref{eqn:s2}), we assume the vanishing of the right-hand side on (\ref{eqn:fed}). It follows that 
\begin{equation} 
\log u-v=\mbox{constant} 
 \label{eqn:unknown}
\end{equation} 
because $u>0$ everywhere.  This unknown constant is to be determined by (\ref{eqn:tmc}), that is, $\lambda=\Vert u\Vert_1$ prescribed in advance.  Consequently we obtain 
\[ u=\frac{\lambda e^v}{\int_\Omega e^v} \] 
which results in (\ref{eqn:fed}) from the Poisson part (\ref{eqn:s4}).  

Maximal existence time $T$ of non-stationary solution $u=u(\cdot,t)$, on the other hand, is estimated from below by $\Vert u_0\Vert_\infty$.  Hence $T<+\infty$ implies 
\[ \lim_{t\uparrow T}\Vert u(\cdot,t)\Vert_\infty=+\infty \] 
and the blowup set ${\cal S}$ defined by 
\begin{equation} 
{\cal S}=\{ x_0\in\overline{\Omega} \mid \mbox{$\exists x_k\rightarrow x_0$, $\exists t_k\uparrow T$ such that $u(x_k,t_k)\rightarrow+\infty$}\}  
 \label{eqn:blowupset1}
\end{equation} 
is not empty. In \cite{s13} we have studied this case to exclude boundary blowup.  Thus, if $T<+\infty$ in (\ref{s})-(\ref{eqn:s2}) then it holds that 
\begin{equation} 
u(x,t)dx\rightharpoonup \sum_{x_0\in {\cal S}}m(x_0)\delta_{x_0}(dx)+f(x)dx \quad \mbox{in ${\cal M}(\overline{\Omega})=C(\overline{\Omega})'$} 
 \label{eqn:bfq}
\end{equation} 
as $t\uparrow T$.  More precisely, the blowup set ${\cal S}$ defined by (\ref{eqn:blowupset1}) satisfies ${\cal S}\subset \Omega$ and $\sharp {\cal S}<+\infty$, and $0\leq f=f(x)\in L^1(\Omega)\cap C(\overline{\Omega}\setminus {\cal S})$.  The inner blowup mechanism, however, is more complicated than suspected by \cite{suzuki05, suzuki11}.  

Here we show the following theorem which refines the fact that collapse mass quantization implies type II blowup rate.  Henceforth, $C_i$, $i=1,2,\cdots, 25$, denote positive constants and 
\[ R(t)=(T-t)^{1/2}. \] 

\begin{thm}
Let $x_0\in {\cal S}$ and $t_k\uparrow T$.  Then there is a subsequence denoted by the same symbol and $m\in {\bf N}$, such that given $0<\varepsilon \ll 1$, we have $\tilde s>1$, $x_k^j\in B(x_0, C_1R(t_k'))$, and $0<b_j\leq \varepsilon$, $1\leq j\leq m$, satisfying 
\begin{eqnarray*} 
& & B(x_k^j, b_jR(t_k'))\cap B(x_k^i, b_iR(t_k'))=\emptyset, \qquad \qquad i\ne j, \ k \gg 1 \\ 
& & \limsup_{k\rightarrow\infty}\left\vert \Vert u(\cdot,t_k')\Vert_{L^1(B(x_k^j, b_jR(t_k'))}-8\pi\right\vert <\varepsilon, \quad \ 1\leq j\leq m \\ 
& & \lim_{b\uparrow+\infty}\limsup_{k\rightarrow\infty}R(t_k')\Vert u(\cdot, t_k')\Vert_{L^\infty(B(x_0, bR(t_k'))\setminus \bigcup_{j=1}^mB(x_k^j, b_jR(t_k')))}\leq \tilde s\varepsilon   
\end{eqnarray*} 
for $t_k'\uparrow T$ defined by $R(t_k')=\tilde s R(t_k)$. 
 \label{thm:2}
\end{thm}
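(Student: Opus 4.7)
The plan is to work in backward self-similar variables centered at the blowup point. Setting
\[ z_k(y)=R(t_k)^2\, u(x_0+R(t_k)y,\,t_k), \]
so that $\|z_k\|_{L^1(B(0,b))}=\|u(\cdot,t_k)\|_{L^1(B(x_0,bR(t_k)))}$, the collapse-formation result (\ref{eqn:bfq}) together with $\mathcal{S}\subset\Omega$ gives uniform $L^1$ bounds on $z_k\,dy$ over every compact ball of the rescaled plane. I would then apply a Lions-type concentration-compactness decomposition of the measures $z_k(y)\,dy$: after passing to a subsequence, they split into a locally smooth background and finitely many concentration cores $x_k^j$, each carrying a positive asymptotic mass $\mu_j$.

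The second step is to identify $\mu_j=8\pi$ up to $o(1)$. This uses the $\varepsilon$-regularity principle for (\ref{s})--(\ref{eqn:s4}): if the $L^1$ mass of $u$ on a parabolic neighbourhood is strictly below $8\pi$, then $u$ enjoys a quantitative pointwise bound on a slightly smaller neighbourhood. This principle follows from the Trudinger--Moser inequality applied to $v=(-\Delta)^{-1}u$ combined with the monotonicity of $\mathcal{F}(u)$ from (\ref{eqn:fed}), in the spirit of \cite{ss01a,sen07,ns08}. By contraposition each concentration core accrues at least $8\pi-o(1)$ of mass, and since the total collapse mass $m(x_0)$ is already known to lie in $8\pi\mathbb{N}$, each core must carry precisely $8\pi+o(1)$. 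This yields the second estimate of the theorem, and the disjointness of the balls at the selected scale follows from the a priori separation of the cores $x_k^j$ in rescaled variables.

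Finally, I would select the enlarged scale $\tilde{s}R(t_k)$ by a pigeonhole argument on the dyadic scales at which bubbles appear. The bubbles $B(x_k^j,b_j R(t_k))$ may a priori sit at several distinct radii $b_j$, and between two clusters of scales there must exist a gap over which essentially no mass is shed; picking $\tilde{s}$ in such a gap and setting $R(t_k')=\tilde{s}R(t_k)$, the residual profile lies under the $\varepsilon$-regularity threshold on the annular set $B(x_0,bR(t_k'))\setminus\bigcup_j B(x_k^j,b_j R(t_k'))$, which converts into a pointwise estimate of the form $R(t_k')\|u(\cdot,t_k')\|_{L^\infty}\lesssim \tilde{s}\varepsilon$ on that set, giving the third inequality. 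The main obstacle is precisely this scale selection: sub-bubbles may recur at arbitrarily small nested scales, and the iterative extraction must terminate cleanly so that exactly $m$ disjoint $8\pi$-bubbles are exposed at the common scale $\tilde{s}R(t_k)$, with neither a hidden bubble sitting further out nor an unresolved one nested within. Calibrating $\tilde{s}$ against $\varepsilon$ so that these two failure modes are simultaneously avoided, while keeping $\mathcal{F}$-monotonicity compatible with the chosen gap, is the delicate combinatorial and analytic core of the argument.
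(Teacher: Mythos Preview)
Your argument has a genuine gap at the quantization step. You write that ``the total collapse mass $m(x_0)$ is already known to lie in $8\pi\mathbb{N}$,'' and you use this to pin each core mass to $8\pi+o(1)$. But $m(x_0)\in 8\pi\mathbb{N}$ is \emph{not} known in this setting; the paper states explicitly, right after Theorem~\ref{thm:2}, that the theorem does \emph{not} imply $m(x_0)\in 8\pi\mathbb{N}$ and that the non-quantized case remains open. So your mechanism for forcing each bubble to carry exactly $8\pi$ is circular. The $\varepsilon$-regularity you invoke only gives a small threshold $\varepsilon_0$, not $8\pi$, and Trudinger--Moser together with ${\cal F}$-monotonicity does not close the gap between $\varepsilon_0$ and $8\pi$ in this Dirichlet setting.

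What the paper actually uses in place of your argument is a \emph{Liouville property} for entire-in-time weak solutions of $a_t=\Delta a-\nabla\cdot a\nabla\Gamma\ast a$ on ${\bf R}^2\times(-\infty,+\infty)$: any such solution with bounded multiplicate operator has total mass either $0$ or $8\pi$ (Lemma~\ref{lem:1.2}). To land on such an entire solution, the paper does not apply concentration--compactness directly to your snapshots $z_k(y)\,dy$. It first passes to the weak scaling limit $\zeta(dy,s)$ along $s_k$, then performs a \emph{scaling back} (\ref{eqn:c9}) to produce $A(dy,s)$ solving the equation on ${\bf R}^2\times(-\infty,0)$, and only then applies concentration--compactness to the translates $A(dy,-\tilde s_\ell)$. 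On each compact piece the translation limit is an entire weak solution, and Lemma~\ref{lem:1.2} forces its mass to be $8\pi$. The parameter $\tilde s$ in the statement is this second time-translation parameter $\tilde s_\ell$, not a dyadic pigeonhole scale; and the residual $L^\infty$ bound comes from the improved $\varepsilon$-regularity (Lemma~\ref{lem:ier}) applied on the vanishing part, not from free-energy monotonicity. Your outline is missing both the double-limit structure and the Liouville rigidity that drive the quantization.
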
 

From the parabolic envelope (see (\ref{eqn:paraen1} below) we obtain  
\[ \lim_{b\uparrow+\infty}\lim_{t\uparrow T}\left\vert \Vert u(\cdot,t)\Vert_{L^1(B(x_0, bR(t))}-m(x_0)\right\vert=0. \] 
The above theorem, however, does not imply $m(x_0)\in 8\pi{\bf N}$.  For radially symmetric solution of ${\cal S}=\{0\}$ we have always $m(x_0)=8\pi$.  Formal solution with $m(x_0)=16\pi$ is also constructed in \cite{ssv13}.  The case $m(x_0)\not\in 8\pi {\bf N}$, however, has not yet been known. Key ingredients of the proof of the above theorem are weak scaling limit, concentration compactness principle, Liouville property, and improved $\varepsilon$ regularity. 

This paper is composed of three sections.  Taking preliminaries in section \ref{sec:2}, we show Theorem \ref{thm:2} in section \ref{sec:3}.

\section{Preliminaries}\label{sec:2}

We start with the {\it weak form} introduced by \cite{ss01a}.  It is derived from the symmetry of the Green's function $G(x,x')=G(x',x)$, taking the test fucntion in 
\begin{equation} 
{\cal X}=\{ \varphi\in C^2(\overline{\Omega}) \mid \mbox{$\left. \frac{\partial\varphi}{\partial \nu}\right\vert_{\partial\Omega}=0$} \}, 
 \label{eqn:x}
\end{equation} 
that is, 
\begin{equation} 
\frac{d}{dt}\int_\Omega \varphi u(\cdot,t)=\int_\Omega u(\cdot,t)\Delta\varphi+\frac{1}{2}\iint_{\Omega\times\Omega}\rho_\varphi(x,x')u(x,t)u(x', t) \ dxdx' 
 \label{eqn:prw}
\end{equation} 
for 
\[ \rho_\varphi(x,x')=\nabla\varphi(x)\cdot\nabla_xG(x,x')+\nabla\varphi(x')\cdot\nabla_{x'}G(x,x'). \] 
Equality (\ref{eqn:prw}) arises with 
\begin{equation} 
\rho_\varphi\in L^\infty(\Omega\times \Omega), 
 \label{eqn:double-finite}
\end{equation} 
derived from a delicate propeprty of $G(x,x')$.  More precisely, we have the interior regularity 
\begin{equation} 
G(x,x')=\Gamma(x-x')+K(x,x') 
 \label{eqn:ir} 
\end{equation}
with 
\begin{eqnarray*} 
& & K\in C^{2+\theta}((\Omega\times \overline{\Omega})\cup (\overline{\Omega}\times \Omega)), \quad 0<\theta<1 \\ 
& & \Gamma(x)=\frac{1}{2\pi}\log\frac{1}{\vert x\vert}, 
\end{eqnarray*} 
and also the boundary regularity 
\begin{equation}  
G(x,x')=E(x,x')+K(x,x') 
 \label{eqn:br}
\end{equation} 
with 
\begin{eqnarray*} 
& & K\in C^{2+\theta}(\overline{\Omega\cap B(x_0,R)}\times \overline{\Omega\cap B(x_0,R)}), \quad 0<\theta<1 \\ 
& & E(x,x')=\Gamma(X-X')-\Gamma(X-X_\ast') 
\end{eqnarray*} 
valid to $x_0\in\partial\Omega$ and $0<R\ll 1$.  In (\ref{eqn:br}) we use the conformal diffeomorphism 
\[ X:\overline{\Omega\cap B(x_0,2R)}\rightarrow \overline{{\bf R}^2_+}, \quad {\bf R}^2_+=\{ (\xi, \eta) \mid \eta>0 \} \] 
and also $X_\ast=(\xi, -\eta)$ for $X=(\xi, \eta)$.  With these relations we can confirm (\ref{eqn:double-finite}) for $\varphi\in {\cal X}$ (see \cite{s13}).  

The weak form (\ref{eqn:prw}) implies the monotonicity formula indicated by 
\begin{equation} 
\left\vert \frac{d}{dt}\langle \varphi, \mu(dx,t)\rangle\right\vert \leq C_{3}\Vert \nabla \varphi\Vert_{C^1(\overline{\Omega})} \quad \mbox{a.e. $t$} 
 \label{eqn:monotone}
\end{equation} 
where $\mu(dx,t)=u(x,t)dx$, $0\leq t<T$.  Since ${\cal X}$ is dense in $C(\overline{\Omega})$ and there holds the total mass conservation (\ref{eqn:tmc}), this $\mu(dx,t)$ is extended a $\ast$ weakly continuous measure up to $t=T$: $\mu(x,t)\in C_\ast([0,T], {\cal M}(\overline{\Omega})$.  Then $\epsilon$ regularity valid to (\ref{s})-(\ref{eqn:s4}) implies (\ref{eqn:bfq}) (see \cite{s13}).  More precisely, there is $\varepsilon_0>0$ such that 
\begin{equation} 
x_0\in \overline{\Omega}, \ \limsup_{t\uparrow T}\Vert u(\cdot,t)\Vert_{L^1(\Omega\cap B(x_0,R))}<\varepsilon_0, \ 0<R\ll 1 \quad \Rightarrow \quad x_0\not\in {\cal S}. 
 \label{eqn:e-reg}
\end{equation} 

Property (\ref{eqn:e-reg}) is a localization of the rigidness for the existence of the solution global-in-time, which means $T=+\infty$ for $0<\lambda\ll 1$ (see \cite{jl92} for (\ref{s})-(\ref{eqn:s3}), (\ref{eqn:s2}), and (\ref{eqn:sd})). It implies  
\[ \limsup_{t\uparrow T}\Vert u(\cdot,t)\Vert_{L^1(\Omega\cap B(x_0,R))}\geq \varepsilon_0, \quad 0<\forall R\ll 1, \ \forall x_0\not\in {\cal S}, \] 
while the above $\limsup_{t\uparrow T}$ is replaced by $\liminf_{t\uparrow T}$ because $u(x,t)dx$, $0\leq t<T$, is extended to $\mu(dx,t)\in C_\ast([0,T], {\cal M}(\overline{\Omega}))$: 
\[ \liminf_{t\uparrow T}\Vert u(\cdot,t)\Vert_{L^1(\Omega\cap B(x_0,R))}\geq \varepsilon_0, \quad  0<\forall R\ll 1, \ \forall x_0\not\in {\cal S}.  \] 
Then the total mass conservation (\ref{eqn:tmc}) implies the finiteness of blowup points, more precisely, 
\[ \sharp {\cal S}\leq \lambda/\varepsilon_0<+\infty. \] 
By the elliptic and parabolic regularities, the singular part of $\mu(dx,T)$, denoted by $\mu_s(dx,T)$, is composed of a finite sum of delta functions, 
\begin{equation} 
\mu(dx,T)=\sum_{x_0\in {\cal S}}m(x_0)\delta_{x_0}(dx)+f(x)dx 
 \label{eqn:wl}
\end{equation} 
with $m(x_0)\geq \varepsilon_0$ and $0\leq f=f(x)\in L^1(\Omega)\cap C(\overline{\Omega}\setminus {\cal S})$.

Here we note that this $\varepsilon$ regularity used in \cite{ss01a} is improved as follows.  
\begin{thm}[\cite{ss02b}]
We have $\varepsilon_0>0$ and $t_0\in (0,T)$ such that if the initial value $u_0=u_0(x)$ to (\ref{s})-(\ref{eqn:s4}) satisfies 
\begin{equation} 
\Vert u_0\Vert_{L^1(\Omega\cap B(x_0,R))}<2\varepsilon_0 
 \label{eqn:26}
\end{equation} 
for $x_0\in\overline{\Omega}$ and $0<R\ll 1$, then any $\tau\in (0,t_0)$ admits $C_4=C_4(\tau)$ such that 
\[ \sup_{t\in [\tau, t_0]}\Vert u(\cdot,t)\Vert_{L^\infty(\Omega\cap B(x_0, R/2))} \leq C_4, \] 
where $u=u(x,t)$ is the solution.  
 \label{thm:ss02b}
\end{thm}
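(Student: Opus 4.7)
The plan is to combine a short-time propagation of local mass via the weak-form monotonicity (\ref{eqn:monotone}) with a localized parabolic regularity argument that upgrades an $L^1$ smallness condition into an $L^\infty$ bound on a smaller ball. The factor $2$ in the hypothesis $\Vert u_0\Vert_{L^1(\Omega\cap B(x_0,R))}<2\varepsilon_0$ is exactly the margin needed to absorb the mass change on $[0,t_0]$ while keeping the localized mass below the $\varepsilon$-regularity threshold.

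First I would fix a cutoff $\varphi\in \mathcal{X}$ with $\varphi\equiv 1$ on $B(x_0,3R/4)$, $\operatorname{supp}\varphi\subset B(x_0,R)$, and $\partial_\nu\varphi=0$ on $\partial\Omega$ (this is possible even when $x_0\in\partial\Omega$ by constructing $\varphi$ in the conformal coordinates of (\ref{eqn:br})). Integrating (\ref{eqn:monotone}) gives
\[
\int_\Omega \varphi\, u(\cdot,t)\le \int_\Omega \varphi\, u_0+C_3\Vert \nabla\varphi\Vert_{C^1(\overline{\Omega})}\,t<2\varepsilon_0+C(R)\,t_0.
\]
Choosing $t_0=t_0(R,\varepsilon_0)$ small enough that $C(R)t_0<\varepsilon_0$ yields $\Vert u(\cdot,t)\Vert_{L^1(\Omega\cap B(x_0,3R/4))}<3\varepsilon_0$ for every $t\in[0,t_0]$.

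Second, on $B(x_0,R/2)$ I would decompose the Poisson potential as $v=v_1+v_2$, where $v_1$ is the Newtonian (or reflected Newtonian, using $E$ from (\ref{eqn:br})) potential of $\chi_{B(x_0,3R/4)}\,u$ and $v_2=v-v_1$ is harmonic on $B(x_0,3R/4)$. Interior/boundary elliptic estimates from (\ref{eqn:ir})--(\ref{eqn:br}) give $\Vert \nabla v_2\Vert_{L^\infty(B(x_0,R/2))}\le C\lambda$. For $v_1$ the Brezis--Merle / Trudinger--Moser machinery yields $e^{pv_1}\in L^1_{\mathrm{loc}}$ with norm controlled in terms of the local $L^1$ mass of $u$, so if $3\varepsilon_0$ lies below the corresponding critical constant we obtain the exponential integrability on any $p<\infty$. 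Multiplying the equation by $u^{p-1}$ against a further cutoff and using these estimates then yields a local $L^p$ bound for $u$ on $[\tau,t_0]\times B(x_0,R/2)$, with a constant that depends on $\tau$ because only an $L^1$ control on $u_0$ is available inside $B(x_0,R)$.

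Third, I would close the argument by parabolic Moser iteration (or a single Sobolev bootstrap): once $u\in L^p$ for $p$ large, $\nabla v\in L^\infty$ locally, and $u_t=\Delta u-\nabla\cdot(u\nabla v)$ becomes a linear parabolic equation with bounded drift on $B(x_0,R/2)$, yielding the asserted $L^\infty$ estimate. The main obstacle is handling the boundary case $x_0\in\partial\Omega$: the cutoff and the Brezis--Merle step must be carried out through the conformal flattening $X$ of (\ref{eqn:br}), checking that the reflected singularity $\Gamma(X-X_\ast')$ does not spoil the exponential integrability and that the null-flux condition (\ref{eqn:s3}) is compatible with both the energy estimate for $u^{p-1}$ and the Moser iteration. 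A secondary subtle point is the $\tau$-dependence of $C_4$: since no $L^\infty$ information on $u_0$ is assumed, smoothing by the parabolic semigroup requires a positive elapsed time, and the constant must genuinely blow up as $\tau\downarrow 0$.
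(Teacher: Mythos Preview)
The paper does not give a self-contained proof of this theorem; it is quoted from \cite{ss02b}, with only the one-line remark that ``for the proof we use the parabolic regularity of such [local $L^p$] norms besides the monotonicity formula (\ref{eqn:monotone}).'' Your three-step outline --- propagate local $L^1$ smallness via (\ref{eqn:monotone}), bootstrap to local $L^p$ by testing against $u^{p-1}$ with a cutoff, then Moser-iterate to $L^\infty$ --- matches this description and is indeed the argument of \cite{ss02b} (see also Lemma~12.1 of \cite{suzuki05}, which the paper invokes in the proof of Lemma~\ref{lem:ier}).

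One point where you diverge from the standard argument: in Step~2 you invoke Brezis--Merle/Trudinger--Moser to obtain $e^{pv_1}\in L^1_{\mathrm{loc}}$, but this exponential integrability of the potential is not what actually closes the $u^{p-1}$ energy estimate. The usual route multiplies (\ref{s}) by $u^{p-1}\varphi^{2p}$ and integrates by parts so that the drift term produces $\int u^{p+1}\varphi^{2p}$ via $-\Delta v=u$; this is then absorbed into the diffusion term by the Gagliardo--Nirenberg inequality, and the smallness of the local $L^1$ mass is precisely the constant that makes the absorption possible. Your bound on $e^{pv_1}$ does not feed into that inequality, and without a relation like $u\le Ce^v$ (which holds only for stationary solutions) it does not by itself yield $u\in L^p_{\mathrm{loc}}$. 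This is not a fatal gap --- your Steps~1 and~3 are correct and Step~2 can be repaired by replacing Brezis--Merle with the Gagliardo--Nirenberg absorption just described --- but the specific mechanism you name is not the one that does the work.
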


The above $C_4$ is independent of any other quantities than (\ref{eqn:26}) concerning $u_0$, say, local $L^p$ norm in $p>1$. Actually, for the proof we use the parabolic regularity of such norms besides the monotonicity formula (\ref{eqn:monotone}).  Using scaling invariance of (\ref{s})-(\ref{eqn:s4}), Theorem \ref{thm:ss02b} takes the following form. 

\begin{lem}[improved $\varepsilon$-regularity]
Let $u=u(x,t)$ be a (classical) solution to (\ref{s})-(\ref{eqn:s4}) in $\Omega\times (-T,T)$ and let $u_0=u(\cdot,0)$.  Then there are positive constants $\varepsilon_0$, $\sigma$, and $C_3$ independent of $x_0\in \Omega$ and $0<R\ll 1$, such that 
\[ \Vert u_0\Vert_{L^1(B(x_0,R))}<\varepsilon_0 \] 
implies 
\[ \sup_{t\in [-\sigma_0R^2, \sigma_0R^2]}\Vert u(\cdot,t)\Vert_{L^\infty(B(x_0, R/2))}\leq C_5R^{-2}. \] 
 \label{lem:ier}
\end{lem}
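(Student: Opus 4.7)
The plan is to deduce Lemma \ref{lem:ier} from Theorem \ref{thm:ss02b} by the parabolic scaling invariance of (\ref{s})-(\ref{eqn:s4}), with the monotonicity formula (\ref{eqn:monotone}) used to transport the $L^{1}$ smallness backward in time from $t=0$ to an earlier initial moment. A direct computation shows that if $(u,v)$ solves (\ref{s})-(\ref{eqn:s4}) on $\Omega\times(-T,T)$, then
\[
\tilde u(y,s)=R^{2}u(x_{0}+Ry,R^{2}s),\qquad \tilde v(y,s)=v(x_{0}+Ry,R^{2}s)
\]
solves the same system on the dilated domain $\tilde\Omega=\{y\in{\bf R}^{2}:x_{0}+Ry\in\Omega\}$ with $\tilde v|_{\partial\tilde\Omega}=0$, and $\Vert\tilde u(\cdot,s)\Vert_{L^{1}(B(0,\rho))}=\Vert u(\cdot,R^{2}s)\Vert_{L^{1}(B(x_{0},R\rho))}$. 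For interior $x_{0}$ and $R\ll 1$, $\partial\tilde\Omega$ lies at distance $\gtrsim R^{-1}$ from the origin, so the Green's function of $\tilde\Omega$ converges uniformly to $\Gamma$ on every fixed compact set, and the constants $C_{3}$ of (\ref{eqn:monotone}) and $C_{4}$ of Theorem \ref{thm:ss02b}, when applied to the scaled problem on a fixed compact subset of $\tilde\Omega$, are thereby independent of $R$.

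In the rescaled variable, the hypothesis reads $\Vert\tilde u(\cdot,0)\Vert_{L^{1}(B(0,1))}<\varepsilon_{0}$. I would fix some $r\in(1/2,1)$ and choose a cut-off $\varphi\in C^{2}(\overline{\tilde\Omega})$ with $\varphi\equiv 1$ on $B(0,r)$, $\mathrm{supp}\,\varphi\subset B(0,1)$, and $\Vert\nabla\varphi\Vert_{C^{1}}\le C$; the Neumann condition on $\partial\tilde\Omega$ is vacuous since $\mathrm{supp}\,\varphi$ is well inside $\tilde\Omega$. Then (\ref{eqn:monotone}) gives, for every $|s^{\ast}|\le 2\sigma_{0}$,
\[
\Vert\tilde u(\cdot,s^{\ast})\Vert_{L^{1}(B(0,r))}\le\langle\varphi,\tilde\mu(\cdot,s^{\ast})\rangle\le\langle\varphi,\tilde\mu(\cdot,0)\rangle+C_{3}|s^{\ast}|<2\varepsilon_{0}
\]
once $\sigma_{0}$ is small. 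I then apply Theorem \ref{thm:ss02b} to the time-shifted solution at initial moment $s^{\ast}=-2\sigma_{0}$, with hypothesis radius $r$, terminal time $t_{0}=3\sigma_{0}$, and $\tau=\sigma_{0}$: the outcome is $\sup_{s\in[-\sigma_{0},\sigma_{0}]}\Vert\tilde u(\cdot,s)\Vert_{L^{\infty}(B(0,r/2))}\le C$. Rescaling back yields the lemma, after a final adjustment of the absolute factor $1/r$ to replace $B(x_{0},rR/2)$ by $B(x_{0},R/2)$ (harmlessly modifying $\sigma_{0}$ and $C_{5}$).

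The main obstacle is the book-keeping of nested balls: Theorem \ref{thm:ss02b} gives $L^{\infty}$ control on a ball strictly smaller than the one supporting its hypothesis, and (\ref{eqn:monotone}) forces the cut-off support to strictly contain the ball being monitored, so the chain ``hypothesis ball $\supset$ cut-off support $\supset$ conclusion ball'' must accommodate both contractions while still matching the $B(x_{0},R/2)$ demanded by the statement. The $R$-independence of every constant then follows from the scale invariance of (\ref{s})-(\ref{eqn:s4}) together with the compact-set convergence of the Green's function of $\tilde\Omega$ to $\Gamma$ as $R\downarrow 0$, itself a direct consequence of the interior decomposition (\ref{eqn:ir}).
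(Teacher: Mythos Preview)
Your approach via parabolic scaling is the same as the paper's, and your use of the monotonicity formula to shift the initial time backward is a reasonable way to obtain the two-sided time interval. The substantive gap is in your treatment of the $R$-independence of the constants.

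You assert that $C_{4}$ in Theorem~\ref{thm:ss02b} is independent of $R$ because ``the Green's function of $\tilde\Omega$ converges uniformly to $\Gamma$ on every fixed compact set.'' But the proof of Theorem~\ref{thm:ss02b} feeds on local $L^{q}$ control of $\nabla\tilde v$, and $\tilde v(y,s)=\int_{\tilde\Omega}G_{\tilde\Omega}(y,y')\tilde u(y',s)\,dy'$ involves the full mass $\lambda$ of $\tilde u$ spread over all of $\tilde\Omega$, not merely over a fixed compact set. Convergence of $G_{\tilde\Omega}$ on compact $y,y'$-sets says nothing about this integral when $y'$ ranges over the expanding domain. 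This is exactly the point the paper works out: using the decomposition (\ref{eqn:ir}) it writes $\nabla v^{\beta}$ as a bounded term plus the Newtonian convolution $h_{\beta}(x,t)=\frac{1}{2\pi}\int_{\beta^{-1}\omega}\frac{u^{\beta}(x',t)}{|x-x'|}\,dx'$ and then obtains the uniform bound $\sup_{t}\Vert\nabla v^{\beta}(\cdot,t)\Vert_{L^{q}(B)}\le C_{8}(q)$ for $1\le q<2$ by a direct convolution estimate using only $\Vert u^{\beta}(\cdot,t)\Vert_{1}=\lambda$. These are precisely inequalities (\ref{eqn:196})--(\ref{eqn:197}), and they are what allows one to repeat the proof of Theorem~\ref{thm:ss02b} with constants independent of $\beta=R$. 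Your sentence ``the constants \dots\ are thereby independent of $R$'' is the conclusion, not the argument; the uniform $L^{q}$ estimate on $\nabla v^{\beta}$ is the missing step. Once you insert it, your black-box application of Theorem~\ref{thm:ss02b} together with the backward shift via (\ref{eqn:monotone}) is equivalent to the paper's ``repeat the proof.''
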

\begin{proof} 
Since there is no blowup point of $u=u(x,t)$ on $\partial\Omega$, it keeps to be smooth near the boundary (see \cite{s13}). Hence there is a smooth subdomain $\omega\subset \subset \Omega$ such that 
\[ v(x,t)=v_0(x,t)+\int_\omega\Gamma(x-x')u(x',t)dx'+\int_\omega K(x,x')u(x',t)dx', \quad x\in \omega, \] 
where $v_0=v_0(x,t)$ is a smooth function on $\overline{\omega}\times [-T,T]$.  
Given $x_0\in\Omega$ and $0<R\ll 1$, we assume $x_0=0$ to take the scaling of $(u,v)$ as 
\begin{equation} 
u^\beta(x,t)=\beta^2u(\beta x, \beta^2t), \quad v^\beta(x,t)=v(\beta x, \beta^2t) 
 \label{eqn:si}
\end{equation} 
with $\beta=R$. 

This $(u^\beta, v^\beta)=(u^\beta(x,t), v^\beta(x,t))$ satisfies 
\begin{equation} 
u^\beta_t=\Delta u^\beta-\nabla\cdot u^\beta\nabla v^\beta, \quad -\Delta v^\beta=u^\beta \qquad \mbox{in $\beta^{-1}\Omega\times (-\beta^{-2}T, \beta^{-2}T)$}
 \label{eqn:193}
\end{equation} 
and also 
\begin{eqnarray*} 
& & v^\beta(x,t)=v_0(\beta x, \beta^2t)+\int_{\beta^{-1}\omega}(\Gamma(x-x')-\frac{1}{2\pi}\log\beta)u^\beta(x',t)dx' \\ 
& & \quad +\int_\omega K(\beta x, x')u(x',\beta^2t)dx'. 
\end{eqnarray*} 
Hence we obtain 
\[ \vert \nabla v^\beta(x,t)\vert \leq h_\beta(x,t)+C_6, \quad (x,t)\in \beta^{-1}\Omega\times (-\beta^{-2}T, \beta^{-2}T) \] 
with 
\[ h_\beta(x,t)=\frac{1}{2\pi}\int_{\beta^{-1}\omega}\frac{u^\beta(x',t)}{\vert x-x'\vert}dx', \quad \Vert u^\beta(\cdot,t)\Vert_{L^1(\beta^{-1}\Omega)}=\lambda. \] 

We fix $t$, $\beta$ for the moment, to take zero extension of $f(x)=u^\beta(x,t)/2\pi$ outside $\beta^{-1}\omega$.  Then it follows that 
\[ h_\beta(x,t)=\int_{{\bf R}^2}\frac{f(x')}{\vert x-x'\vert}dx'\leq \frac{\lambda}{2\pi}+\int_{\vert x-x'\vert\leq 1}\frac{f(x')}{\vert x-x'\vert}dx'=\frac{\lambda}{2\pi}+(g\ast f)(x) \] 
where $g(x)=\chi_B(x)/\vert x\vert$.  Since $g\in L^q({\bf R}^2)$, $1\leq q<2$, we obtain 
\[ \Vert h_\beta(\cdot,t)\Vert_{L^q(B)}\leq C_7=C_7(q), \quad 1\leq q<2 \] 
and hence 
\begin{equation} 
\sup_{t\in [-\beta^{-2}T, \beta^{-2}T]}\Vert \nabla v^\beta(\cdot, t) \Vert_{L^q(B)}\leq C_8=C_8(q), \quad 1\leq q<2. 
 \label{eqn:196}
\end{equation} 
Inequality (\ref{eqn:196}) implies also 
\begin{equation} 
\sup_{t\in [-\beta^{-2}T, \beta^{-2}T]}\Vert \tilde v^\beta(\cdot,t)\Vert_{L^p(B)}\leq C_9=C_9(p), \quad 1\leq p<\infty 
 \label{eqn:197}
\end{equation} 
for 
\[ \tilde v^\beta=v^\beta-\frac{1}{\vert B\vert}\int_Bv^\beta. \] 

Now we can repeat the proof of Theorem \ref{thm:ss02b} (see also Lemma 12.1 of \cite{suzuki05}), using (\ref{eqn:193}), (\ref{eqn:196}), and (\ref{eqn:197}) because $\tilde v^\beta$ may be replaced by $v^\beta$ in (\ref{eqn:193}).  Thus we have $\varepsilon_0$, $C_5$, and $\sigma_0$ such that 
\[ \Vert u_0^\beta\Vert_{L^1(B)}<\varepsilon_0 \quad \Rightarrow \quad \sup_{t\in [-\sigma_0, \sigma_0]}\Vert u^\beta(\cdot,t)\Vert_{L^\infty(B/2)}\leq C_5, \] 
which is equivalent to the assertion. 
\end{proof} 

{\it Weak solution} is introduced from the weak form (\ref{eqn:prw}) by \cite{ss02a}.  It is a fundamental tool in later arguments. Thus we say that $0\leq \mu=\mu(dx,t)\in C_\ast([0,T], {\cal M}(\overline{\Omega}))$ is a weak solution to (\ref{s})-(\ref{eqn:s4}) if there is 
\[ 0\leq \nu=\nu(\cdot,t)\in L^\infty_\ast(0,T; {\cal E}') \] 
called {\it multiplicate operator} satisfying the following properties, where ${\cal E}$ is the closure of the linear space 
\[ {\cal E}_0=\{ \psi+\rho_\varphi \mid \psi\in C(\overline{\Omega}\times\overline{\Omega}), \ \varphi\in {\cal X} \} \] 
in $L^\infty(\Omega\times\Omega)$: 
\begin{enumerate} 
\item For $\varphi\in {\cal X}$ the mapping $t\in [0,T]\mapsto \langle \varphi, \mu(dx,t)\rangle$ is absolutely continuous and there holds 
\begin{equation} 
\frac{d}{dt}\langle \varphi, \mu(dx,t)\rangle=\langle \Delta \varphi, \mu(dx,t) \rangle+\frac{1}{2}\langle \rho_\varphi, \nu(\cdot,t) \rangle_{{\cal E}, {\cal E}'} \quad \mbox{a.e. $t$}. 
 \label{eqn:mul1}
\end{equation} 
\item We have 
\begin{equation} 
\left. \nu(\cdot,t)\right\vert_{C(\overline{\Omega}\times\overline{\Omega})}=\mu(dx,t)\otimes \mu(dx',t) \quad \mbox{a.e. $t$}.  
 \label{eqn:mul2}
\end{equation} 
\end{enumerate} 
Here we confirm that the property $\nu\geq 0$ of $\nu\in {\cal E}'$ means 
\[ \left\vert \langle f, \nu\rangle_{{\cal E}, {\cal E}'}\right\vert\leq \langle g, \nu \rangle \] 
for any $f, g\in {\cal E}$ satisfying $\vert f\vert\leq g$ a.e. in $\Omega\times\Omega$.  

Total mass conservation of this weak solution is obvious, 
\[ \mu(\overline{\Omega},t)=\mu(\overline{\Omega}, 0), \quad t\in [0,T].  \] 
This weak solution, however, cannot be a measure-valued solution constructed in \cite{ds09, lsv12} (see also \cite{ssv13}) because of the following property.  

\begin{thm}[\cite{ss02a}] 
If the initial meause $\mu_0(dx)\in {\cal M}(\overline{\Omega})$ admits $x_0\in \Omega$ such that 
\[ \mu_0(\{ x_0\})>8\pi, \quad \lim_{R\downarrow 0}\frac{1}{R^2}\left\langle \vert x-x_0\vert^2\chi_{B(x_0,R)}, \mu_0(dx)\right\rangle =0 \] 
then there is no weak solution to (\ref{s})-(\ref{eqn:s4}) even local-in-time.  
 \label{thm:ss02a}
\end{thm}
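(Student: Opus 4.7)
The plan is a localized second-moment (virial) obstruction: I would construct a nonnegative quantity
\[
I_R(t)=\langle\varphi_R,\mu(dx,t)\rangle
\]
based on a smooth cutoff of $|x-x_0|^2$ at scale $R$, show that it starts arbitrarily small in $R$ by the vanishing-moment hypothesis, and force it strictly negative in positive time because the supercritical atom at $x_0$ makes the $\rho_{\varphi_R}$-contribution dominate.

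Fix $\eta\in C^\infty([0,\infty))$ with $\eta\equiv 1$ on $[0,1/4]$ and $\eta\equiv 0$ on $[1,\infty)$ and put $\varphi_R(x)=\eta(|x-x_0|^2/R^2)|x-x_0|^2$. Since $x_0\in\Omega$, for $R$ small $\varphi_R\in\mathcal{X}$; a direct computation gives $\|\Delta\varphi_R\|_\infty\le C$ uniformly in $R$, $\Delta\varphi_R\equiv 4$ on $B:=B(x_0,R/2)$, and $I_R(0)=o(R^2)$ is exactly the hypothesis. The decisive pointwise identity uses $G=\Gamma+K$ from \eqref{eqn:ir} together with the algebraic cancellation
\[
2(x-x_0)\cdot\nabla_x\Gamma(x-x')+2(x'-x_0)\cdot\nabla_{x'}\Gamma(x-x')\equiv-\frac{1}{\pi},
\]
so that $\rho_{\varphi_R}(x,x')=-\tfrac{1}{\pi}+O(R)$ uniformly on $B\times B$ (the error coming from the $C^{2+\theta}$ remainder $K$), while globally $\rho_{\varphi_R}\in L^\infty$ uniformly in $R$ by \eqref{eqn:double-finite}.

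Plugging $\varphi_R$ into \eqref{eqn:mul1} and splitting $\nu$ along $\chi_{B\times B}$, positivity of $\nu$ together with monotone approximation of $\chi_B$ from below by continuous $\psi_n\in C_c(B)$ (via the identification $\nu|_{C(\overline\Omega\times\overline\Omega)}=\mu\otimes\mu$) yields $\langle\chi_{B\times B},\nu\rangle\ge\mu(B,t)^2$. Since $\rho_{\varphi_R}$ is negative on $B\times B$ the inequality flips to
\[
\tfrac{1}{2}\langle\rho_{\varphi_R}\chi_{B\times B},\nu\rangle\le-\tfrac{1}{2\pi}\mu(B,t)^2+o_R(1).
\]
The remaining off-diagonal contribution of $\rho_{\varphi_R}$ and the Laplacian term are absorbed into $4\mu(B,t)+o_R(1)$ using $\|\rho_{\varphi_R}\|_\infty,\|\Delta\varphi_R\|_\infty\le C$ combined with the shrinking annular mass $\mu(B(x_0,R)\setminus B,t)\to 0$ (uniformly in $t$ on a short interval via lower sc.\ of $\mu(B,\cdot)$ and upper sc.\ of $\mu(\overline{B(x_0,R)},\cdot)$). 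Altogether,
\[
\frac{dI_R}{dt}\le 4\mu(B,t)-\tfrac{1}{2\pi}\mu(B,t)^2+o_R(1).
\]
Lower semicontinuity of $t\mapsto\mu(B,t)$ on the open ball $B$, a consequence of $*$-weak continuity of $\mu$, yields $\mu(B,t)\ge\mu_0(\{x_0\})-\delta>8\pi$ on $[0,\tau]$ for $\delta,\tau$ small. Since $-\tfrac{1}{2\pi}M^2+4M=-\tfrac{1}{2\pi}M(M-8\pi)<0$ for $M>8\pi$, this produces $dI_R/dt\le -c_0<0$ on $[0,\tau]$, hence
\[
0\le I_R(\tau)\le I_R(0)-c_0\tau=o(R^2)-c_0\tau,
\]
and letting $R\downarrow 0$ with $\tau$ fixed gives the contradiction $0\le-c_0\tau<0$.

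The main technical obstacle is the rigorous pairing of $\nu$ with the discontinuous weight $\rho_{\varphi_R}\chi_{B\times B}$, since the identification $\nu|_{\mathcal{E}}=\mu\otimes\mu$ holds only on continuous functions. Positivity of $\nu$ combined with monotone-from-below approximation of open-ball indicators by continuous functions gives $\nu(\chi_{B\times B})\ge\mu(B)^2$, and crucially the negativity of $\rho_{\varphi_R}$ on $B\times B$ converts this lower bound on $\nu$-mass into the upper bound on the integral that is needed. The sharp threshold $8\pi$ is precisely the zero of $M(M-8\pi)$ in the quadratic, so absorbing the off-diagonal contributions uniformly in $R$ without degrading the constant $4$ in front of $\mu(B,t)$ is the second delicate point, for which the uniform vanishing of the annular mass is essential.
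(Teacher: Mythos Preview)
The paper does not prove this theorem; it is quoted from \cite{ss02a}. Your overall strategy---a localized second-moment obstruction---is correct and is the mirror image of the argument the paper \emph{does} carry out for Lemma~\ref{lem:1.2}. The identification $\rho_{\varphi_R}=-\tfrac1\pi+O(R)$ on $B\times B$, the use of positivity of $\nu$ to pass from $\nu|_{C}=\mu\otimes\mu$ to $\nu(\chi_{B\times B})\ge\mu(B)^2$, and the lower bound $\mu(B,t)>8\pi$ for short time are all fine.

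The gap is in your off-diagonal estimate. You assert that the remainder is $o_R(1)$ because of vanishing annular mass, but you have missed the piece $B\times(\Omega\setminus B(x_0,R))$ (and its reflection). There $\nabla\varphi_R(x')=0$, so $\rho_{\varphi_R}(x,x')=2(x-x_0)\cdot\nabla_xG(x,x')$; with $|x-x_0|\le R/2$ and $|x-x'|\ge R/2$ this is only $O(1)$, and its $\nu$-pairing is of order $\mu(B,t)\,\mu(\Omega\setminus B(x_0,R),t)$, which does \emph{not} tend to zero with $R$. Absorbing it on the right would degrade the constant $4$ and destroy the sharp threshold $8\pi$. The cure, visible in the paper's proof of Lemma~\ref{lem:1.2}, is to choose the cutoff so that $|\nabla\varphi_R|^2\le C\varphi_R$ (the function $c$ there does exactly this); then the off-diagonal error is controlled not by annular mass but by $C R^{-2}I_R(t)$ itself, yielding
\[
\frac{dI_R}{dt}\le -c_0+\frac{C}{R^2}\,I_R.
\]
This forces the contradiction on a time scale $\tau\sim R^2$: one gets $I_R(\tau R^2)\le e^{C\tau}I_R(0)-c_1\tau R^2<0$ for small $R$ since $I_R(0)=o(R^2)$. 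Consequently your final step ``let $R\downarrow0$ with $\tau$ fixed'' must be replaced by $\tau=\tau(R)\sim R^2$; on any \emph{fixed} interval $[0,\tau]$ neither the lower bound $\mu(B(x_0,R/2),t)>8\pi$ nor the smallness of the error survives as $R\downarrow0$.
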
 

The second property is the generation of such a solution.  It follows because ${\cal E}$ is separable.  

\begin{lem}[\cite{ss02a}]
Let $\{ \mu_k(dx,t)\}_k\subset C_\ast([0,T], {\cal M}(\overline{\Omega}))$ be a sequence of weak solutions to (\ref{s})-(\ref{eqn:s4}). Let the associated multiplicate operator of $\mu_k(dx,t)$ be $\nu_k(\cdot,t)\in L^\infty_\ast(0,T; {\cal E}')$, and assume 
\begin{equation} 
\mu_{k}(\overline{\Omega}, 0)+\sup_{t\in [0,T]}\Vert \nu_k(\cdot,t)\Vert_{{\cal E}'} \leq C_{10}, \quad k=1,2,\cdots. 
 \label{eqn:w-bound}
\end{equation} 
Then there are $\mu(dx,t)\in C_\ast([0,T], {\cal M}(\overline{\Omega}))$ and $\nu(\cdot,t)\in L^\infty_\ast(0, T; {\cal E}')$ such that 
\begin{eqnarray*} 
& & \mu_k(dx,t)\rightharpoonup \mu(dx,t) \quad \mbox{in $C_\ast([0,T], {\cal M}(\overline{\Omega}))$} \\ 
& & \nu_k(\cdot,t)\rightharpoonup \nu(\cdot,t) \quad \qquad \mbox{in $L^\infty_\ast(0,T; {\cal E}')$}  
\end{eqnarray*} 
up to a subsequence, and this $\mu(dx,t)$ is a weak solution to (\ref{s})-(\ref{eqn:s4}) with the multiplicate operator $\nu(\cdot,t)$ satisfying 
\[ \mu(\overline{\Omega},0)+\Vert \nu(\cdot,t)\Vert_{{\cal E}'}\leq C_{10}.  \] 
 \label{pro:1}
\end{lem}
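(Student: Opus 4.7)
The strategy is a standard weak-$*$ compactness and diagonal extraction, organised around the three ingredients furnished by the hypotheses: separability of ${\cal E}$, the uniform bound (\ref{eqn:w-bound}), and the identity (\ref{eqn:mul1}), which provides the equicontinuity in $t$ needed for an Arzel\`a-Ascoli argument.

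First I would extract the weak-$*$ limit of the multiplicate operators. Since $C(\overline{\Omega}\times\overline{\Omega})$ is separable and ${\cal X}\subset C^2(\overline{\Omega})$ is separable (so $\varphi\mapsto\rho_\varphi$ generates a separable subset of $L^\infty(\Omega\times\Omega)$), the space ${\cal E}_0$ and hence its $L^\infty$-closure ${\cal E}$ is separable. Therefore $L^1(0,T;{\cal E})$ is separable, and the bound $\sup_t\Vert\nu_k(\cdot,t)\Vert_{{\cal E}'}\leq C_{10}$ together with the sequential Banach-Alaoglu theorem yields a subsequence with $\nu_k\rightharpoonup \nu$ in $L^\infty_\ast(0,T;{\cal E}')$; weak-$*$ lower semicontinuity gives the norm bound on $\nu$.

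Second I would pass to the limit of the measures. Total mass conservation gives $\mu_k(\overline{\Omega},t)=\mu_k(\overline{\Omega},0)\leq C_{10}$ uniformly in $k$ and $t$. Applied to (\ref{eqn:mul1}), the uniform bound on $\nu_k$ produces the uniform Lipschitz estimate
\[ \left\vert\frac{d}{dt}\langle\varphi,\mu_k(dx,t)\rangle\right\vert\leq \Vert\Delta\varphi\Vert_\infty\, C_{10}+\tfrac{1}{2}\Vert\rho_\varphi\Vert_{{\cal E}}\, C_{10}, \qquad \varphi\in{\cal X}. \]
Choosing a countable family in ${\cal X}$ dense in $C(\overline{\Omega})$, Arzel\`a-Ascoli and diagonal extraction produce a further subsequence with $\mu_k\to\mu$ in $C_\ast([0,T],{\cal M}(\overline{\Omega}))$, and the limit obeys $\mu(\overline{\Omega},0)\leq C_{10}$. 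Passing to the limit in (\ref{eqn:mul1}) itself is then routine: after multiplication by any $\eta\in C_c^\infty(0,T)$ and integration in $t$, the terms involving $\mu_k$ converge by $C_\ast$-convergence, and the double-integral term converges by $\nu_k\rightharpoonup\nu$ since $t\mapsto \eta(t)\rho_\varphi$ lies in $L^1(0,T;{\cal E})$. Absolute continuity of $t\mapsto\langle\varphi,\mu(dx,t)\rangle$ follows a posteriori from essential-boundedness of $\langle\rho_\varphi,\nu(\cdot,t)\rangle_{{\cal E},{\cal E}'}$ in $t$.

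The main obstacle is the nonlinear compatibility condition (\ref{eqn:mul2}) at the limit, because $\mu_k\otimes\mu_k$ is quadratic in $\mu_k$ while $\nu_k\rightharpoonup\nu$ only in a time-averaged sense. I would verify it after pairing with $\eta(t)\psi(x,x')$ for arbitrary $\eta\in L^1(0,T)$ and $\psi\in C(\overline{\Omega}\times\overline{\Omega})$. The left-hand side converges thanks to $\nu_k\rightharpoonup\nu$. For the right-hand side, Stone-Weierstrass reduces matters to finite tensor-sums $\psi(x,x')=\sum_i\phi_i(x)\varphi_i(x')$ with $\phi_i,\varphi_i\in C(\overline{\Omega})$; for such $\psi$ the integral decouples as
\[ \iint\psi(x,x')\mu_k(dx,t)\mu_k(dx',t)=\sum_i\langle\phi_i,\mu_k(\cdot,t)\rangle\langle\varphi_i,\mu_k(\cdot,t)\rangle, \]
which converges pointwise in $t$ by the $C_\ast$-convergence of $\mu_k$ and is uniformly dominated by $\Vert\psi\Vert_\infty C_{10}^2$. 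Dominated convergence then yields the identity for tensor-sums, and density closes the argument, giving (\ref{eqn:mul2}) for $(\mu,\nu)$ and establishing that $\mu$ is a weak solution with associated multiplicate operator $\nu$.
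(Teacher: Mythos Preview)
The paper does not supply a proof of this lemma; it is quoted from \cite{ss02a}, with the single-sentence indication ``It follows because ${\cal E}$ is separable.'' Your proposal is correct and is exactly the argument that hint points to: separability of ${\cal E}$ is what makes sequential Banach--Alaoglu available for $\{\nu_k\}$ in $L^\infty_\ast(0,T;{\cal E}')$, the weak formulation (\ref{eqn:mul1}) supplies the uniform Lipschitz bound needed for Arzel\`a--Ascoli on $\{\mu_k\}$, and the Stone--Weierstrass reduction to finite tensor sums is the standard way to close the nonlinear compatibility (\ref{eqn:mul2}). One small refinement worth making explicit: in the last step the exceptional null set of times in (\ref{eqn:mul2}) a priori depends on $\psi$, so you should run the argument first on a countable dense family of tensor sums to obtain a single full-measure set, and then extend to all $\psi\in C(\overline{\Omega}\times\overline{\Omega})$ by density.
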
 

Henceforth, we agree with the following notations. First, if $\mu(dx,t)$ has a density as 
\[ \mu(dx,t)=u(x,t)dx, \quad 0\leq u(\cdot,t)\in C([0,T], L^1(\Omega)), \] 
then the multiplicate operator is always taken as 
\[ \nu(\cdot,t)=u(x,t)u(x',t) \ dxdx', \] 
recalling ${\cal E}\subset L^\infty(\Omega\times\Omega)$.  Under this agreement, condition (\ref{eqn:w-bound}) is reduced to 
\begin{equation} 
\mu_{k}(\overline{\Omega},0)\leq C_{11}, \quad k=1,2,\cdots 
 \label{eqn:sk1} 
\end{equation}
if each $\mu_k(dx,t)$ takes density in $[0,T)$ such as 
\[ \mu_k(dx,t)=u_k(x,t)dx, \quad 0\leq u_k=u_k(\cdot,t)\in C([0,T), L^1(\Omega)). \] 
In fact, since we use 
\[ \nu_k(\cdot,t)=u_k(x,t)u_k(x',t)dxdx' \] 
in this case, inequality (\ref{eqn:sk1}) means 
\[ \Vert u_k(\cdot,t)\Vert_1=\Vert u_{k}(\cdot,0)\Vert_1=\mu_{k}(\overline{\Omega},0)\equiv \lambda_k \leq C_{11} \] 
and hence (\ref{eqn:w-bound}) follows with 
\[ \Vert \nu(\cdot,t)\Vert_{{\cal E}'}=\lambda_k^2\leq C_{11}^2. \] 

Next, we define the regularity of the above weak solution.  First, given $\mu=\mu(\cdot, t)\in {\cal M}(\overline{\Omega})$, we have a unique $v=v(\cdot,t)\in W^{1,q}(\Omega)$, $1\leq q<2$, such that 
\[ -\Delta v=\mu, \quad \left. v\right\vert_{\partial\Omega}=0. \] 
Let $I\subset (0,T)$ be an open interval and $\omega\subset \Omega$ an open set. If the weak solution $\mu(dx,t)$ has a density $u=u(\cdot,t)\in L^p(\omega)$ in $\omega\subset \Omega$, $1<p<\infty$, for $t\in I$, the above $v=v(\cdot,t)$ is in $W^{2,p}_{loc}(\omega)$ from the elliptic regularity. By Sobolev's and Morrey's imbedding theorems this implies $(u\nabla v)(\cdot,t)\in L^1_{loc}(\omega)$. Hence we can require the additional property 
\[ \frac{d}{dt}\langle \varphi, \mu(dx,t)\rangle=\langle \Delta \varphi(dx,t)\rangle+\langle \nabla \varphi\cdot\nabla v, \mu(dx,t)\rangle, \quad \mbox{a.e. $t\in I$} \] 
for any $\varphi\in C_0^2(\omega)$.  In such a case we say that $\mu(dx,t)$ is regular in $\omega\times I$. In Theorem \ref{pro:1}, if $\mu_k(dx,t)$ is regular with the density $u_k(x,t)$ in $\omega\times(0,T)$ satisfying 
\[ \sup_{t\in [0,T]}\Vert u_k(\cdot,t)\Vert_{L^p(\omega)}\leq C_{12} \] 
for $p>1$, then the generated $\mu(dx,t)$ is also regular in $\omega\times (0,T)$.  

Concluding this section, we turn to the {\it Liouville property} of this weak solution. Henceforth, we put ${\cal M}({\bf R}^2)=C_0({\bf R}^2)'$, where 
\[ C_0({\bf R}^2)=\{ f\in C({\bf R}^2\bigcup\{\infty\}) \mid f(\infty)=0 \} \] 
and ${\bf R}^2\bigcup \{\infty\}$ denotes one-point compactification of ${\bf R}^2$.  We can define the weak solution to 
\begin{equation} 
a_t=\Delta a-\nabla\cdot a\nabla\Gamma\ast a \quad \mbox{in ${\bf R}^2\times (-\infty, +\infty)$} 
 \label{eqn:entire}
\end{equation}
similarly (see the proof of Lemma \ref{lem:1.2} below for precise definition). In the following, $0\leq \varphi_{0,r}=\varphi_{0,r}(x)\leq 1$ denotes a smooth function with support contained on $\overline{B(0,r)}$ and equal to $1$ on $B(0,r/2)$.  

\begin{lem}[Liouville property]
Let $0\leq a=a(dx,t)\in C_\ast((-\infty,+\infty), {\cal M}({\bf R}^2))$ be a weak solution to (\ref{eqn:entire}) with uniformly bounded multiplicate operator. Then we have either $a({\bf R}^2,t)=8\pi$ or $a({\bf R}^2, t)=0$, exclusively in $t\in {\bf R}$. 
 \label{lem:1.2} 
\end{lem}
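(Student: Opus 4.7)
The result is a Liouville-type theorem driven by two-dimensional mass conservation and the virial identity for the Smoluchowski--Poisson system.

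I would first establish mass conservation: $\lambda:=a(\mathbb{R}^2,t)$ is independent of $t$. Insert the cutoff $\varphi_{0,r}(x)$ into the weak formulation (the $\mathbb{R}^{2}$-analogue of (\ref{eqn:mul1})), so $\langle\varphi_{0,r},a(dx,t)\rangle\to\lambda(t)$ as $r\to\infty$. Exploiting the antisymmetric form
\[
\rho_\varphi(x,x')=-\frac{1}{2\pi}\cdot\frac{(\nabla\varphi(x)-\nabla\varphi(x'))\cdot(x-x')}{|x-x'|^{2}},
\]
the mean value theorem yields $\|\rho_{\varphi_{0,r}}\|_\infty\le\|D^{2}\varphi_{0,r}\|_\infty/(2\pi)=O(r^{-2})$, and similarly $\|\Delta\varphi_{0,r}\|_\infty=O(r^{-2})$. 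Together with $a(\mathbb{R}^2,t)<\infty$ and the uniform bound on $\|\nu(\cdot,t)\|_{\mathcal{E}'}$, the right-hand side of the weak identity vanishes in the limit, giving $\lambda(t)\equiv\lambda$.

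The essential step is the virial identity. A direct computation on $\mathbb{R}^{2}$ gives $\Delta(|x|^{2})\equiv 4$ and
\[
\rho_{|x|^{2}}(x,x')=-\frac{1}{2\pi}\cdot\frac{(2x-2x')\cdot(x-x')}{|x-x'|^{2}}=-\frac{1}{\pi},
\]
a \emph{constant}. Formally this yields the two-dimensional virial law
\[
\frac{d}{dt}\int_{\mathbb{R}^{2}}|x|^{2}\,a(dx,t)=4\lambda-\frac{\lambda^{2}}{2\pi}=\frac{\lambda(8\pi-\lambda)}{2\pi}.
\]
To make this rigorous I would test against $\varphi_r(x)=|x|^{2}\psi(x/r)$, where $\psi$ is a radial smooth cutoff with $\psi\equiv 1$ on $B(0,1)$ and $\mathrm{supp}\,\psi\subset B(0,2)$; then $\|\Delta\varphi_r\|_\infty$ and $\|D^{2}\varphi_r\|_\infty$ are bounded uniformly in $r$, and one has $\Delta\varphi_r\equiv 4$, $\rho_{\varphi_r}\equiv -1/\pi$ on the bulk $B(0,r)$. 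Writing $I_r(t)=\int|x|^{2}\psi(x/r)\,a(dx,t)$, the weak identity integrates to
\[
I_r(t)-I_r(s)=(t-s)\,\frac{\lambda(8\pi-\lambda)}{2\pi}+\mathrm{Err}_r(s,t),\qquad |\mathrm{Err}_r(s,t)|\le C\int_s^{t}a(\{|x|>r\},\tau)\,d\tau.
\]

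To conclude, suppose for contradiction that $\lambda\notin\{0,8\pi\}$, so $c:=\lambda(8\pi-\lambda)/(2\pi)\ne 0$. For each fixed $r$, $0\le I_r(t)\le 4r^{2}\lambda$ forces $|I_r(t)-I_r(s)|\le 4r^{2}\lambda$; letting $t\to+\infty$ when $c<0$ (so $\lambda>8\pi$) or $s\to-\infty$ when $c>0$ (so $0<\lambda<8\pi$) drives the main term $(t-s)c$ past every prescribed bound. The argument is completed provided $r$ can be taken large enough to absorb $\mathrm{Err}_r$, i.e., provided the time-averaged tail $\int a(\{|x|>r\},\tau)\,d\tau$ is small relative to $|t-s|\cdot|c|$. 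This uniform-in-$\tau$ tightness is the main obstacle; I expect it to come from the uniform bound on the multiplicate operator $\nu$, which is the essential structural hypothesis beyond mere mass finiteness and which ultimately forces $\lambda\in\{0,8\pi\}$.
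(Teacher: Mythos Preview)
Your mass-conservation step and the formal virial computation are both fine, and you have correctly located the obstruction: to pass from the truncated identity for $I_r$ to a contradiction you need the error term
\[
\mathrm{Err}_r(s,t)\;\lesssim\;\int_s^t\bigl[a(\{|x|>r\},\tau)+\text{analogous tail of }\nu(\cdot,\tau)\bigr]\,d\tau
\]
to be $o(|t-s|)$ for $r$ large. The gap is that the hypothesis you invoke does \emph{not} supply this. A uniform bound on $\Vert\nu(\cdot,t)\Vert_{{\cal E}'}$ is only a bound of the form $\sup_{\Vert f\Vert_\infty\le 1}|\langle f,\nu(\cdot,t)\rangle|\le C$, which for $\nu=a\otimes a$ reduces to $a({\bf R}^2,t)^2\le C$; this is a mass bound, not a moment or tightness bound. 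Nothing in the hypotheses prevents the family $\{a(\cdot,t)\}_{t\in{\bf R}}$ from carrying a fixed fraction of its mass out to infinity as $|t|\to\infty$, so your error term can be of order $|t-s|$ with a constant you do not control, and the contradiction does not close.

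The paper avoids tightness altogether by replacing $|x|^2$ with a \emph{bounded}, compactly supported truncation $c(|x|^2)+1$, where $c$ is chosen so that $c(|x|^2)+1\in[0,1]$, $c'(\alpha)\ge 0$, and crucially $c(\alpha)+1+c'(\alpha)\ge\delta>0$. Testing against this yields a differential inequality of the form
\[
\frac{d}{dt}\bigl\langle c(|x|^2)+1,\,a(dx,t)\bigr\rangle\;\ge\;-C\bigl\langle c(|x|^2)+1,\,a(dx,t)\bigr\rangle+\delta m\Bigl(4-\frac{m}{2\pi}\Bigr)
\]
for $0<m<8\pi$. No error term arises because the test function is genuinely admissible. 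Running this ODE \emph{backward} in time shows the bracket would become negative unless $\langle c(|x|^2)+1,a(dx,0)\rangle\ge\eta>0$. The contradiction then comes from \emph{scaling invariance}: $a^\beta(dx,t)=\beta^2 a(\beta\,dx,\beta^2 t)$ is again an entire weak solution with the same mass, so the same lower bound $\eta$ applies to it; but $\langle c(|x|^2)+1,a^\beta(dx,0)\rangle=\langle c(\beta^{-2}|x|^2)+1,a(dx,0)\rangle\to 0$ as $\beta\to\infty$ by dominated convergence (since $c(0)+1=0$). The upper bound $m\le 8\pi$ is obtained by the analogous argument on $[0,+\infty)$ (cited from earlier work). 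The missing idea in your argument is precisely this combination of a bounded surrogate for the second moment with the scaling symmetry, which removes the need for any tightness of the trajectory.
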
 

\begin{proof} 
The estimate from above, $a({\bf R}^2, t)\leq 8\pi$, is done by \cite{suzuki11, s13}.  This property follows from (\ref{eqn:entire}) in ${\bf R}^2\times [0,+\infty)$.  Here we show the reverse part, $a({\bf R}^2,t)\geq 8\pi$ unless $a(dy,s)\equiv 0$, using (\ref{eqn:entire}) in ${\bf R}^2\times (-\infty,0]$.  

The proof, however, is similar.  In fact, by the definition, there is a multiplicate operator 
\[ 0\leq \kappa(\cdot,t)\in L^\infty_\ast(-\infty,0; {\cal K}'), \quad \Vert \kappa(\cdot,t)\Vert_{{\cal K}'}\leq C_{13} \] 
satisfying 
\begin{equation}
\frac{d}{dt}\langle \varphi, a(dx,t)\rangle=\langle \Delta\varphi, a(dx,t)\rangle+\frac{1}{2}\langle \rho_\varphi^0, \kappa(\cdot,t)\rangle_{{\cal K}, {\cal K}'} \quad \mbox{a.e. $t<0$} 
 \label{eqn:wf}
\end{equation} 
for each $\varphi\in C_0^2({\bf R}^2)$, where 
\[ \rho_\varphi^0(x,x')=-\frac{\nabla\varphi(x)-\nabla\varphi(x')}{2\pi\vert x-x'\vert^2}\cdot(x-x'). \] 
We put $\varphi(x)=\varphi_{0,1}(x/R)$ for $R>0$ in (\ref{eqn:wf}), integrate in $t$, and make $R\uparrow +\infty$ to conclude that $a({\bf R}^2,t)$ is constant in $t$, denoted by $m\geq 0$: 
\[ a({\bf R}^2, t)=m \quad -\infty<t\leq 0. \] 
Now we assume $m>0$ and derive $m\geq 8\pi$.  

First, we use {\it local second moment}, taking the smooth function $c=c(\alpha)$ defined on $\alpha\geq 0$ such that 
\begin{eqnarray} 
& & 0\leq c'(\alpha)\leq 1, \ -1\leq c(\alpha)\leq 0, \quad \alpha\geq 0 \nonumber\\ 
& & c(\alpha)=\left\{ \begin{array}{ll} 
\alpha-1, & 0\leq \alpha\leq 1/4 \\ 
0, & \alpha\geq 4. \end{array} \right. 
 \label{eqn:c}
\end{eqnarray} 
Then it holds that 
\begin{eqnarray*} 
& & \frac{d}{dt}\langle c(\vert x\vert^2)+1, a(dx,t)\rangle \\ 
& & \quad =\langle 4c''(\vert x\vert^2)\vert x\vert^2+4c'(\vert x\vert^2)-\frac{m}{2\pi}c'(\vert x\vert^2), a(dx,t)\rangle - \langle J, \kappa(\cdot,t) \rangle
\end{eqnarray*} 
with 
\[ J=J(x,x')=\frac{(c'(\vert x\vert^2)-c'(\vert x'\vert^2))(\vert x\vert^2-\vert x'\vert^2)}{4\pi\vert x-x'\vert^2}. \] 

Next we use 
\[ \vert J\vert \leq C_{14}(\varphi_{0,8}(x)+\varphi_{0,8}(x'))\{ (c(\vert x\vert^2)+1)+(c(\vert x'\vert^2)+1)\} \] 
and 
\[ \vert c''(\alpha)\alpha\vert \leq C_{15}(c(\alpha)+1) \] 
to deduce 
\begin{eqnarray*} 
& & \left\vert \frac{d}{dt}\langle c(\vert x\vert^2)+1, a(dx,t)\rangle-(4-\frac{m}{2\pi})\langle c'(\vert x\vert^2), a(dx,t)\rangle \right\vert \\ 
& & \quad \leq C_{16}\langle c(\vert x\vert^2)+1, a(dx,t)\rangle.  
\end{eqnarray*} 

Since 
\[ c(\alpha)+1+c'(\alpha)\geq \delta \] 
with $\delta>0$, it follows that 
\[ \frac{d}{dt}\langle c(\vert x\vert^2)+1, a(dx,t)\rangle \geq -C_{17}\langle c(\vert x\vert^2)+1, a(dx,t)\rangle+\delta m(4-\frac{m}{2\pi}) \] 
under the assumption of $0<m<8\pi$.  Therefore, if 
\[ \langle c(\vert x\vert^2)+1, a(dx,0)\rangle<\eta\equiv \delta m(4-\frac{m}{2\pi})/C_{17} \] 
is the case we obtain 
\[ \langle c(\vert x\vert^2)+1, a(dx,t)\rangle<0, \qquad t\ll -1, \] 
a contradiction.  Thus we have 
\[ 0<m<8\pi \quad \Rightarrow \quad \langle c(\vert x\vert^2)+1, a(dx,0)\rangle \geq \eta>0. \]  

Her we use the scaling invariance of (\ref{eqn:entire}) as in \cite{ko03}.  Namely, for each $\beta>0$ the measure 
\[ a^\beta(dx,t)=\beta^2a(dx',t'), \quad x'=\beta x, \quad t'=\beta^2t \] 
is again a weak solution satisfying $a^\beta({\bf R}^2,t)=m$.  Hence we obtain 
\[ \langle c(\vert x\vert^2)+1, a^\beta(dx,0)\rangle=\langle c(\beta^{-2}\vert x\vert^2)+1, a(dx. 0)\rangle \geq \eta \] 
if $0<m<8\pi$.  Letting $\beta\uparrow+\infty$, however, we get a contradiction $0\geq \eta$ by the dominated convergence theorem.  
\end{proof} 

\section{Proof of Theorem \ref{thm:2}}\label{sec:3}

Given $x_0\in{\cal S}$, we take the backward self-similar transformation 
\[ z(y,s)=(T-t)u(x,t), \quad y=(x-x_0)/(T-t)^{1/2}, \quad s=-\log(T-t). \] 
The underlying scaling invariance of (\ref{s})-(\ref{eqn:s4}) is (\ref{eqn:si}). Then we obtain 
\begin{eqnarray} 
& & z_s=\Delta z-\nabla\cdot z\nabla(w+\vert y\vert^2/4) 
 \label{eqn:bs1} \\ 
& & w(y,s)=\int_{\Omega_s}G_s(y,y')z(y',s) \ dy' \quad \mbox{in $\bigcup_{s>-\log T}\Omega_s\times \{s\}$} 
 \label{eqn:bs3}
\end{eqnarray} 
with 
\begin{equation} 
\frac{\partial z}{\partial \nu}-z\frac{\partial}{\partial\nu}(w+\vert y\vert^2/4)=0 \quad \mbox{on $\bigcup_{s>-\log T}\partial \Omega_s\times\{s\}$} 
 \label{eqn:bs2}
\end{equation} 
where $\Omega_s=(T-t)^{-1/2}(\Omega-\{x_0\})$ and $G_s(y,y')=G(x,x')$.  
Since 
\[ \int_{\Omega_s}z(y,s) \ dy=\int_\Omega u(x,t) \ dx=\lambda \] 
we can apply the argument used for the proof of Lemma \ref{pro:1} to the rescaled equation (\ref{eqn:bs1})-(\ref{eqn:bs2}).  

Thus given $t_k\uparrow+\infty$, we take 
\[ s_k=-\log (T-t_k) \ \uparrow+\infty. \] 
Passing to a subsequence denoted by the same symbol, we obtain 
\begin{equation} 
z(y, s+s_k)ds\rightharpoonup \zeta(dy,s) \quad \mbox{in $C_\ast(-\infty, +\infty; {\cal M}({\bf R}^2)$}). 
 \label{eqn:zeta}
\end{equation} 
Here zero extension is taken to $z(y,s+s_k)$ where it is not defined.  At this limiting process, inequality (\ref{eqn:monotone}) is used to derive the most important property, {\it parabolic envelope}, indicated by 
\begin{eqnarray} 
& & \zeta({\bf R}^2, s)=m(x_0)>0 
 \label{eqn:paraen1} \\ 
& & \langle \vert y\vert^2, \zeta(dy,s)\rangle \leq C_{18}
 \label{eqn:paraen2}
\end{eqnarray} 
valid to any $s\in (-\infty, +\infty)$ (see \cite{suzuki05, s13} for the proof). 

This $\zeta(dy,s)$ becomes a weak solution to (\ref{eqn:bs1})-(\ref{eqn:bs2}).  More precisely, if $x_0\in \Omega$, there is $0\leq \kappa=\kappa(\cdot,s)\in L^\infty_\ast(-\infty, +\infty; {\cal K}')$ with ${\cal K}$ the closure of the linear space 
\[ {\cal K}_0=\{ \psi+\rho_\varphi^0 \mid \psi \in C_0({\bf R}^2\times {\bf R}^2), \ \ \varphi\in C^2_0({\bf R}^2)\} \] 
in $L^\infty({\bf R}^2\times {\bf R}^2)$ and 
\[ \rho^0_\varphi(y,y')=-\frac{\nabla\varphi(y)-\nabla\varphi(y')}{2\pi\vert y-y'\vert^2}\cdot (y-y')  \] 
associated with $\Gamma(y)=\frac{1}{2\pi}\log\frac{1}{\vert y\vert}$.  This multiplicate operator $\kappa(\cdot,s)$ satisfies 
\begin{eqnarray*} 
& & \Vert \kappa(\cdot,s)\Vert_{{\cal K}'}\leq \lambda^2 \quad \mbox{a.e. $s$} \\ 
& & \left. \kappa(\cdot,s)\right\vert_{C_0({\bf R}^2\times{\bf R}^2)}=\zeta(dy,s)\otimes \zeta(dy',s) 
\end{eqnarray*}  
and there holds that 
\[ \frac{d}{ds}\langle \varphi, \zeta(dy,s)\rangle=\langle \Delta \varphi+\frac{y}{2}\cdot\nabla\varphi, \zeta(dy,s)\rangle+\frac{1}{2}\langle \rho^0_\varphi, \kappa(\cdot,s)\rangle \quad \mbox{a.e. $s$} \]  
for any $\varphi\in C_0^2({\bf R}^2)$, with the local absolute continuity of 
\[ s\in (-\infty, +\infty)\mapsto \langle \varphi, \zeta(dy,s)\rangle. \] 

If $x_0\in\partial\Omega$ the above $\zeta(dy,s)$ takes support on a closed half space, which we assume $\overline{{\bf R}^2_+}=\{ x=(\xi,\eta)\in {\bf R}^2\mid \eta\geq 0\}$ without loss of generality.  We obtain the same property as that of the above weak solution, replacing ${\cal K}_0$ and $\rho^0_\varphi$ by 
\[ {\cal K}_0=\{ \psi+\rho_\varphi^0 \mid \psi \in C_{0e}({\bf R}^2\times {\bf R}^2), \ \ \varphi\in C^2_0({\bf R}^2)\} \] 
and 
\[ \rho^0_\varphi(y,y')=\nabla\varphi(y)\cdot\nabla_yE(y,y')+\nabla\varphi(y')\cdot\nabla_{y'} E(y,y'),  \] 
respectively, where 
\[ E(y,y')=\Gamma(y-y')-\Gamma(y-y'_\ast) \] 
with $y_\ast=(\xi, -\eta)$ for $y=(\xi, \eta)$ and 
\begin{eqnarray*} 
& & C_{0e}=\{ f\in C_0 \mid f(\xi,-\eta)=f(\xi,\eta)\} \\ 
& & C_{0e}({\bf R}^2\times {\bf R}^2)=\{f\in C_0({\bf R}^2\times {\bf R}^2) \mid \\ 
& & \quad f(\xi_1, -\eta_1; \xi_2, \eta_2)=f(\xi_1, \eta_1: \xi_2, -\eta_2)=f(\xi_1, \eta_1; \xi_2, \eta_2) \}.  
\end{eqnarray*} 
We may call this $\zeta(dy,s)$ a weak solution to 
\begin{eqnarray*} 
& & z_s=\Delta z-\nabla\cdot z\nabla(w+\vert y\vert^2/4), \quad -\Delta w=z \qquad \mbox{in ${\bf R}^2_+\times (-\infty, +\infty)$} \\ 
& & \frac{\partial z}{\partial\nu}-z\frac{\partial}{\partial \nu}(w+\vert y\vert^2/4)=0 \qquad \qquad \qquad \qquad \quad \mbox{on $\partial {\bf R}^2_+\times(-\infty, +\infty)$} 
\end{eqnarray*} 
with uniformly bounded multiplicate operator. Such a solution provided with (\ref{eqn:paraen1})-(\ref{eqn:paraen2}), however, does not exist which excludes the boundary blowup of (\ref{s})-(\ref{eqn:s4}) (see \cite{s13}). More precisely, with 
\[ I(s)=\langle \vert y\vert^2, \zeta(dy,s)\rangle \] 
the formal calculation 
\begin{equation} 
\frac{dI}{ds}=4m(x_0)+I, \quad \mbox{a.e. $s$} 
 \label{eqn:second-b}
\end{equation} 
derived from 
\[\rho^0_{\vert y\vert^2}(y,y')=\nabla\vert y\vert^2\cdot\nabla_yE(y,y')+\nabla\vert y'\vert^2\cdot\nabla_{y'}E(y,y')=0 \] 
and 
\[ \frac{dI}{ds}=\langle \Delta \vert y\vert^2+\frac{y}{2}\cdot\nabla\vert y\vert^2, \zeta(dy,s)\rangle \] 
is justified by (\ref{eqn:paraen1})-(\ref{eqn:paraen2}).  Then (\ref{eqn:second-b}) with (\ref{eqn:paraen1}) implies 
\[ \lim_{s\uparrow+\infty}I(s)=+\infty, \] 
a contradiction to (\ref{eqn:paraen2}).  

In the case of $x_0\in \Omega$, we arrive at a weak solution $\zeta(dy,s)$ to 
\begin{equation} 
\zeta_s=\Delta \zeta-\nabla\cdot \zeta\nabla(\Gamma\ast \zeta+\vert y\vert^2/4) \quad \mbox{in ${\bf R}^2\times (-\infty, +\infty)$}.  
 \label{eqn:wsl1}
\end{equation} 
This time it follows that 
\[ \rho^0(y,y')=(\nabla\vert y\vert^2-\nabla \vert y'\vert^2)\cdot\nabla\Gamma(y-y') =-\frac{1}{\pi}, \] 
and then (\ref{eqn:second-b}) is replaced by 
\[ \frac{dI}{ds}=4m(x_0)-\frac{m(x_0)^2}{2\pi}+I \quad \mbox{a.e. $s$}. \] 
Properties (\ref{eqn:paraen1})-(\ref{eqn:paraen2}) now imply $m(x_0)\geq 8\pi$ with 
\begin{equation} 
I(s)=\frac{m(x_0)^2}{2\pi}-4m(x_0), \quad -\infty<s<+\infty
 \label{eqn:45}
\end{equation} 
and hence $m(x_0)\geq 8\pi$. 

Here we take the {\it scaling back} of $\zeta(dy,s)$, that is, the transformation 
\begin{equation} 
A(dy', s')=e^s\zeta(dy,s), \quad y'=e^{-s/2}y, \quad s'=-e^{-s}. 
 \label{eqn:c9}
\end{equation} 
This $0\leq A=A(dy, s)\in C_\ast((-\infty, 0], {\cal M}({\bf R}^2))$ becomes a weak solution to 
\begin{equation} 
A_s=\Delta A-\nabla\cdot A\nabla\Gamma\ast A \quad \mbox{in ${\bf R}^2\times (-\infty, 0)$} 
 \label{eqn:c7}
\end{equation}  
satisfying 
\begin{equation} 
A({\bf R}^2, s)=m(x_0), \quad -\infty<s\leq 0 
 \label{eqn:c8}
\end{equation} 
with a uniformly bounded multiplicate operator.  To use Lemma \ref{lem:1.2} now we take the translation limit.  

Thus, given $\tilde s_\ell\uparrow +\infty$, we take 
\begin{equation} 
A_\ell(dy)=A(dy,-\tilde s_\ell)/m(x_0) 
 \label{eqn:52}
\end{equation} 
to apply concentration compactness principle \cite{lions84} (see also p. 39 of \cite{struwe}).  There arises three alternatives, {\it compact}, {\it vanishing}, and {\it dichotomy}, passing to a subsequence.  

\begin{enumerate} 
\item ({\bf compact}) Each $0<\varepsilon<1$ admits $y_\ell\in {\bf R}^2$ and $R>0$ satisfying 
\begin{equation} 
A_\ell(B(y_\ell, R))>1-\varepsilon, \quad \forall \ell. 
 \label{eqn:204}
\end{equation} 
\item ({\bf vanishing}) It holds that 
\begin{equation} 
\lim_{\ell\rightarrow\infty}\sup_xA_\ell(B(x,R))=0
 \label{eqn:205}
\end{equation} 
for any $R>0$. 
\item ({\bf dichotomy}) There is $0<\lambda<1$ such that any $\varepsilon>0$ takes $y_\ell\in {\bf R}^2$ and $R>0$ such that 
\begin{eqnarray} 
& & \liminf_{\ell\rightarrow\infty}A_\ell(B(y_\ell, R))\geq \lambda-\varepsilon 
 \label{eqn:109} \\ 
& & \lim_{R'\uparrow+\infty}\liminf_{\ell\rightarrow\infty}A_\ell({\bf R}^2\setminus B(y_\ell, R'))\geq (1-\lambda)-\varepsilon. 
 \label{eqn:110}
\end{eqnarray} 
\end{enumerate}
We can apply Lemmas \ref{pro:1} and \ref{lem:ier} for the first and the second cases, respectively.  Then a hierarchical argument assures the following lemma. 

\begin{lem}[concentration compactness]
Given $t_k\uparrow T$, we put 
\[ s_k=-\log(T-t_k) \] 
to define the weak scaling limit $\zeta(dy,s)$ and its scaling back as in (\ref{eqn:zeta}) and (\ref{eqn:c9}), respectively.  Then we take $\tilde s_\ell\uparrow+\infty$ arbitrary, and define a family $\{ A_\ell(dy)\}_\ell$ of probability measures on ${\bf R}^2$ by (\ref{eqn:52}).  Then, passing to a subsequence we have $m\in {\bf N}\cup\{0\}$ such that any $\varepsilon>0$ admits $y_\ell^j\in {\bf R}^2$ and $b_j>0$, $1\leq j\leq m$, satisfying 
\begin{eqnarray} 
& & \lim_{\ell\rightarrow\infty}\vert y^i_\ell-y^j_\ell\vert=+\infty, \quad \forall i\neq j \label{eqn:208-1} \\ 
& & \limsup_{\ell\rightarrow \infty}\vert A_\ell (B(y^j_\ell, b_j))-8\pi\vert <\varepsilon, \quad \forall j \label{eqn:208-2} \\ 
& & \vert y^j_\ell\vert \leq C_{19}\tilde s_\ell^{1/2}, \quad \forall \ell\gg 1, \ \forall j. \label{eqn:211}
\end{eqnarray} 
Furthermore, there arises one of the following alternatives. 
\begin{enumerate}
\item $m(x_0)>8\pi m+\varepsilon$ and any $R>0$ admits $\ell_0$ such that 
\begin{eqnarray} 
& & \Vert A_\ell\Vert_{L^\infty({\bf R}^2\setminus \bigcup_{j=1}^mB(y^j_\ell, b_j))}\leq C_{20}R^{-2}, \quad \forall \ell \geq \ell_0 \label{eqn:209-1} \\ 
& & \liminf_{\ell\rightarrow\infty}A_\ell\left( {\bf R}^2\setminus \bigcup_{\ell=1}^mB(y_\ell^j, b_j)\right)\geq m(x_0)-8\pi m-\varepsilon. \label{eqn:209-2}
\end{eqnarray} 
\item $m(x_0)=8\pi m$ and 
\begin{equation} 
\limsup_{\ell\rightarrow\infty}A_\ell\left( {\bf R}^2\setminus \bigcup_{j=1}^mB(y_\ell^j, b_j)\right)<\varepsilon. 
 \label{eqn:210}
\end{equation} 
\end{enumerate} 
 \label{lem:3.1}
\end{lem}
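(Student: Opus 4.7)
The plan is to iterate Lions' concentration compactness trichotomy on $\{A_\ell\}$ and on the residual subprobability measures obtained by peeling off successive concentrations. Each \emph{compact} outcome will be promoted---by spatial translation of the center to the origin and passage to an entire-space weak-solution limit through Lemma \ref{pro:1}---to a weak solution of (\ref{eqn:c7}) on ${\bf R}^2\times {\bf R}$ whose total mass Lemma \ref{lem:1.2} pins to exactly $8\pi$. Each \emph{vanishing} outcome will be converted, via the improved $\varepsilon$-regularity of Lemma \ref{lem:ier}, into the uniform $L^\infty$ bound asserted in (\ref{eqn:209-1}). Each \emph{dichotomy} outcome is split into two pieces of mass $\approx \lambda$ and $\approx 1-\lambda$, and the argument is run recursively on each piece.

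In the compact branch we obtain $y_\ell^1\in {\bf R}^2$ and a radius $R$ with $A_\ell(B(y_\ell^1,R))>1-\varepsilon$. The translated sequence $B_\ell(dy,s):=A(dy+y_\ell^1,s-\tilde s_\ell)$ consists of weak solutions to (\ref{eqn:c7}) on ${\bf R}^2\times(-\infty,\tilde s_\ell]$ with total mass $m(x_0)$ and multiplicate operator bounded by $m(x_0)^2$; Lemma \ref{pro:1} provides a subsequential $C_\ast$-limit $B_\infty(dy,s)$, itself a weak solution on ${\bf R}^2\times {\bf R}$. Since $B_\infty(\overline{B(0,R)},0)\geq (1-\varepsilon)m(x_0)>0$, Lemma \ref{lem:1.2} forces $B_\infty({\bf R}^2,s)=8\pi$. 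Choosing $b_1$ large enough that $B_\infty$ places all but $\varepsilon$ of its mass on $B(0,b_1)$ then yields $|A_\ell(B(y_\ell^1,b_1))\cdot m(x_0)-8\pi|<\varepsilon$ along the subsequence. In the vanishing branch, $\sup_x A_\ell(B(x,R))\to 0$ means $m(x_0)A_\ell(B(x,R))<\varepsilon_0$ uniformly in $x$ for $\ell$ large; Lemma \ref{lem:ier} applied to $A$ at time $-\tilde s_\ell$ then gives $\|A(\cdot,-\tilde s_\ell)\|_{L^\infty(B(x,R/2))}\leq C_5 R^{-2}$ uniformly in $x$, which amounts to alternative 1 with $m=0$.

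Each compact peeling removes $\approx 8\pi$ from the mass budget $m(x_0)$, so the recursion terminates after $m\leq m(x_0)/(8\pi)$ rounds. Distinct centers diverge, $|y_\ell^i-y_\ell^j|\to\infty$, because otherwise a common translation would place two Liouville limits inside the same window, whose total mass is only $8\pi$, contradicting (\ref{eqn:208-2}). The location bound (\ref{eqn:211}) follows from the parabolic envelope: by (\ref{eqn:paraen2}) together with the scaling back (\ref{eqn:c9}) one computes
\[
\langle |y|^2,A(dy,-\tilde s_\ell)\rangle=\tilde s_\ell\,I(-\log\tilde s_\ell)\leq C_{18}\tilde s_\ell,
\]
and combining this with $A_\ell(B(y_\ell^j,b_j))\geq (8\pi-\varepsilon)/m(x_0)$ via Chebyshev forces $|y_\ell^j|^2\leq C\tilde s_\ell$. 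At termination the residual is either vanishing with effective mass $m(x_0)-8\pi m=0$ (alternative 2) or $L^\infty$-bounded with positive mass $\geq m(x_0)-8\pi m-\varepsilon>0$ (alternative 1).

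The principal obstacle is verifying, in the compact branch, that the multiplicate operator of $A$ survives both spatial translation and the weak-$*$ limit in space-time without mass escape to spatial infinity, so that $B_\infty$ genuinely inherits a positive amount of mass and thereby triggers the Liouville identification. A secondary difficulty is the dichotomy recursion, which must be executed along nested subsequences while retaining the weak-solution structure of each localized sub-piece; this is handled by iterating the translation-plus-Lemma \ref{pro:1} argument on each half and diagonalizing to produce a single subsequence valid for all $m$ concentrations simultaneously.
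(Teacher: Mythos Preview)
Your overall architecture---Lions trichotomy, translate-and-Liouville in the compact branch, $\varepsilon$-regularity in the vanishing branch, iterate on dichotomy---matches the paper, and the compact branch and the location bound (\ref{eqn:211}) are handled essentially as the paper does.

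There is, however, a genuine gap in the vanishing branch. You write that Lemma \ref{lem:ier} ``applied to $A$ at time $-\tilde s_\ell$'' yields $\|A(\cdot,-\tilde s_\ell)\|_{L^\infty(B(x,R/2))}\leq C_5 R^{-2}$. But Lemma \ref{lem:ier} is stated for a \emph{classical} solution $u$ of (\ref{s})--(\ref{eqn:s4}) on the bounded domain $\Omega$, whereas $A(dy,s)$ is only a \emph{weak} (measure-valued) solution to (\ref{eqn:c7}) on ${\bf R}^2$, obtained as a $\ast$-weak limit. A priori $A(\cdot,-\tilde s_\ell)$ need not have a density, so the $L^\infty$ norm you invoke has no meaning until regularity is established---which is precisely what you are trying to prove. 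The lemma as stated does not apply to $A$.

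The paper closes this gap by unwinding the definitions all the way back to the classical solution $u$. The vanishing hypothesis on $A_\ell$ is rewritten as $L^1$-smallness of $\zeta$ at $s=-\log\tilde s_\ell$, then via (\ref{eqn:zeta}) as $L^1$-smallness of $z(\cdot,-\log\tilde s_\ell+s_k)$ for $k$ large, and finally as $L^1$-smallness of $u(\cdot,t_k')$ with $t_k'$ defined by $T-t_k'=\tilde s_\ell^{-1}(T-t_k)$. Lemma \ref{lem:ier} is then legitimately applied to $u$, producing a uniform bound $R(t_k')^2\|u(\cdot,t_k')\|_{L^\infty}\leq C c_0^{-2}$ on parabolic balls. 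This bound is transported back to $z$, the limit $k\to\infty$ is taken (the bound survives because it is uniform in $k$), and only then can one read off $\|A_\ell\|_{L^\infty}\leq C R^{-2}$. You need this detour through $u$; invoking the regularity lemma directly at the level of the weak limit is circular.
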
 
\begin{proof} 
If $\{A_\ell(dy)\}_\ell$ is compact, there holds that (\ref{eqn:204}).  Then we apply Lemma \ref{pro:1} to 
\[ a_\ell(dx,t)=A(dy,s), \quad x=y-y_\ell, \ t=s-\tilde s_\ell. \] 
Passing to a subsequence, we have the convergence 
\[ a_k(dx,t)\rightharpoonup a(dx,t) \quad \mbox{in $C_\ast(-\infty,+\infty; {\cal M}({\bf R}^2))$} \] 
and this $a=a(dx,t)$ is a weak solution to (\ref{eqn:entire}) satisfying 
\[ m(x_0)(1-\varepsilon)<a(B_{2R},0)\leq a({\bf R}^2,0)\leq m(x_0). \] 
Therefore, it holds that 
\[ m(x_0)=8\pi \] 
by Lemma \ref{lem:1.2}. We thus have (\ref{eqn:208-2}), (\ref{eqn:210}) for $m=1$, $b_1=R$, and $y_\ell^1=y_\ell$.  

If $\{ A_\ell(dy)\}$ is vanishing, there holds that (\ref{eqn:205}) for any $R>0$.  Hence, given $\varepsilon>0$, we obtain $\ell_0\gg 1$ such that 
\begin{equation} 
\sup_xA\left(B(x,R), -\tilde s_\ell\right)=\sup_{y}\zeta \left(B(y, \tilde s_\ell^{-1/2}R), -\log \tilde s_\ell\right)<\varepsilon/2, \quad \ell\geq \ell_0. 
 \label{eqn:214}
\end{equation} 
We fix such $\ell$, to put $c_0=\tilde s_\ell^{-1/2}R$.  Then, from (\ref{eqn:zeta}), any $b$ admits $k_0$ such that 
\begin{equation} 
\sup_{\vert y\vert\leq b}\Vert z(\cdot,-\log\tilde s_\ell+s_k)\Vert_{L^1(B(y,c_0))}<\varepsilon, \quad k\geq k_0. 
 \label{eqn:64}
\end{equation} 
For $\{ t_k'\}$ defined by 
\begin{equation} 
T-t_k'=\tilde s_\ell^{-1}(T-t_k) 
 \label{eqn:65}
\end{equation} 
inequality (\ref{eqn:64}) means 
\begin{equation} 
\sup_{x\in B(x_0, bR(t_k'))}\Vert u(\cdot,t_k')\Vert_{L^1(B(x, c_0R(t_k'))}<\varepsilon, \quad k\geq k_0, 
 \label{eqn:100}
\end{equation} 
recalling $R(t)=(T-t)^{1/2}$.  

From Lemma \ref{lem:ier}, (\ref{eqn:100}) implies 
\[ \sup_{\vert t-t_k'\vert \leq \sigma_0 c_0^2R(t_k')^2}\Vert u(\cdot,t)\Vert_{L^\infty(B(x, c_0R(t_k')/2)}\leq C_{21}c_0^{-2}R(t_k')^{-2}, \quad k\geq k_0  \] 
for any $x\in B(x_0,bR(t_k'))$.  Hence it follows that 
\begin{equation} 
\sup_{\vert t-t_k'\vert \leq \sigma_0 c_0^2R(t_k')^2}\Vert u(\cdot,t)\Vert_{L^\infty(B(x_0, bR(t_k'))}\leq C_{21}c_0^{-2}R(t_k')^{-2}, \quad k\geq k_0 
 \label{eqn:66}
\end{equation} 
In (\ref{eqn:66}) we have $\sigma_0c_0^2\leq 1/2$ with $\ell\gg 1$, which implies 
\[ \vert t-t_k'\vert \leq \sigma_0c_0^2R(t_k')^2 \quad \Rightarrow \quad \frac{1}{2} \leq \frac{R(t)^2}{R(t_k')^2}=1-\frac{t-t_k'}{T-t_k'} \leq \frac{3}{2}. \] 
Thus any $b>0$ admits $k_0$ such that 
\[ \sup_{\vert t-t_k'\vert\leq \sigma_0c_0^2R(t_k')^2}R(t)^2\Vert u(\cdot,t)\Vert_{L^\infty(B(x_0, bR(t))}\leq c_0^{-2}C_{22}, \quad k\geq k_0. \] 

Next, we put 
\[ s_k'=-\log\tilde s_\ell+s_k=-\log(T-t_k'). \] 
Since 
\[ \vert t-t_k'\vert \leq \sigma_0c_0^2R(t_k')^2 \quad \Leftrightarrow \quad \vert 1-e^{s_k'-s}\vert \leq \sigma_0c_0^2.  \] 
there is an absolute constant $s_0>0$ such that 
\begin{eqnarray} 
& & \sup_{\vert s-s_k'\vert\leq s_0c_0^2}\Vert z(\cdot,s)\Vert_{L^\infty(B_b)}= \nonumber\\ 
& & \quad \sup_{s\in [s_k-\log\tilde s_\ell-s_0c_0^2, s_k-\log \tilde s_\ell+s_0c_0^2]}\Vert z(\cdot,s)\Vert_{L^\infty(B_b)}\leq c_0^{-2}C_{19}, \quad k\geq k_0. 
 \label{eqn:218}
\end{eqnarray} 
Sending $k\rightarrow\infty$ and then $b\uparrow+\infty$, we obtain 
\begin{equation} 
\sup_{\vert s+\log\tilde s_\ell\vert\leq s_0R^2\tilde s_\ell^{-1}}\Vert \zeta(\cdot,s)\Vert_\infty \leq C_{22}R^{-2}\tilde s_\ell 
 \label{eqn:67}
\end{equation} 
by $c_0=R\tilde s_\ell^{-1/2}$.  Inequality (\ref{eqn:209-1}) with $m=0$, 
\begin{equation} 
\Vert A_\ell \Vert_\infty \leq C_{22}R^{-2}
 \label{eqn:220}
\end{equation} 
thus follows from (\ref{eqn:67}) for $s=-\log \tilde s_\ell$.  

In the third case of dichotomy, there is $0<\lambda_1<1$ such that any $\varepsilon>0$ takes $y_\ell^1\in {\bf R}^2$ and $R_1>0$ such that 
\begin{eqnarray} 
& & \liminf_{\ell\rightarrow\infty}A_\ell^1(B(y_\ell^1, R_1))\geq \lambda_1-\varepsilon \label{eqn:221-1} \\ 
& & \lim_{R'\uparrow+\infty}\liminf_{\ell\rightarrow\infty}A^1_\ell({\bf R}^2\setminus B(y_\ell^1, R'))\geq (1-\lambda_1)-\varepsilon, \label{eqn:221-2}
\end{eqnarray}
where $A_\ell^1=A_\ell$.  The main part of $A_\ell^1$ around $y_\ell^1$ is treated similarly to the compact case.  Then we obtain the first bubble centered at $y_\ell^1$ satisfying (\ref{eqn:208-2})-(\ref{eqn:211}) for $j=1$, together with $\lambda_1m(x_0)=8\pi$.  

To examine the residual part of $A_\ell^1$, we apply the concentration compactness principle to $\{ A_\ell^2(dy) \}_k$ defined by $A_\ell^2=\tilde A_\ell^2/m_\ell^2$, where 
\[ \tilde A_\ell^2=\left. A_\ell\right\vert_{B(y_\ell^1, R_1)^c}, \quad m_\ell^2=\tilde A_\ell^2({\bf R}^2). \] 
First, inequalities (\ref{eqn:109})-(\ref{eqn:110}) imply 
\begin{equation} 
\lim_{R'\uparrow+\infty}\limsup_{\ell\rightarrow\infty}A^2_\ell(B(y_\ell^1, R'))\leq 2\varepsilon. 
 \label{eqn:111}
\end{equation} 
If this $\{ A_\ell^2(dy)\}_\ell$ is compact, we have $y_\ell^2\in {\bf R}^2$ and $R_2>0$ satisfying 
\begin{equation} 
A_\ell^2(B(y_\ell^2, R_2))\geq 1-\varepsilon 
 \label{eqn:112}
\end{equation} 
up to a subsequence.  By (\ref{eqn:111}) and (\ref{eqn:112}) it holds that 
\[ \lim_{\ell\rightarrow\infty}\vert y_\ell^2-y_\ell^1\vert=+\infty. \] 
We take the translation limit smilarly to the first compact case, using Lemma \ref{lem:1.2}.  Then it holds that 
\[ \lim_{\ell\rightarrow\infty}m_\ell^2=m_\ast^2(\varepsilon)\in [m_2-2m(x_0)\varepsilon, m_2+2m(x_0)\varepsilon] \] 
for $m_2=m(x_0)-8\pi=(1-\lambda)m(x_0)$ together with 
\[ \lim_{\varepsilon\downarrow 0}m^2_\ast(\varepsilon)=8\pi. \] 
We thus end up with the collapse mass quantization, the second alternative with $m=2$. 

If $\{ A_\ell^2(dy)\}_\ell$ is vanishing, then Lemma \ref{lem:ier} is applicable.  We obtain the first alternative with $m=1$, similarliy.  In the rest case of dichotomy of $\{ A_\ell^2\}_\ell$, we proceed to the third process. Continuing this, we reach the alternatives.  

Finally, (\ref{eqn:45}) means $\langle \vert y\vert^2, A(dy,-\tilde s)\rangle = C_{23}\tilde s$ for $C_{23}=\frac{m(x_0)^2}{2\pi}-4m(x_0)$, which implies 
\[ A(B_R^c, -\tilde s)\leq C_{23}\tilde s/R^2. \] 
Then (\ref{eqn:211}) follows from (\ref{eqn:208-2}).  
\end{proof} 

Now we conclude the proof of Theorem \ref{thm:2}.  First, we assume (\ref{eqn:210}).  This condition implies that $\{ \tilde A_\ell^m\}_\ell$ defined by 
\[ \tilde A_\ell^m=\left. A_\ell\right\vert_{{\bf R}^2\setminus \bigcup_{j=1}^mB(y_\ell^j, b_j)}/A_\ell({\bf R}^2\setminus \bigcup_{j=1}^mB(y_\ell^j, b_j)) \] 
is vanishing.  Hence any $R>0$ admits $\ell_0$ such that (\ref{eqn:209-1}).  

This property means the existence of $\tilde s_1>0$ such that for any $\tilde s\geq \tilde s_1$ and $0<\varepsilon\ll 1$ we have $m(\tilde s)\in {\bf N}$, $y^j(\tilde s)\in {\bf R}^2$, and $b_j(\tilde s)>0$ for $1\leq j\leq m(\tilde s)$ satisfying  
\begin{eqnarray} 
& & \vert y^j(\tilde s)\vert \leq C_{24}\tilde s^{1/2}, \quad b_j(\tilde s)\leq C_{25}, \qquad 1\leq j\leq m(\tilde s) \label{eqn:72}\\ 
& & \lim_{\tilde s\uparrow+\infty}\inf_{1\leq i<j\leq m(\tilde s)}\vert y^i(\tilde s)-y^j(\tilde s)\vert=+\infty \label{eqn:73}\\ 
& & \limsup_{\tilde s\uparrow+\infty}\sum_{j=1}^{m(\tilde s)}\left\vert A(B(y^j(\tilde s), b_j(\tilde s)), -\tilde s)-8\pi\right\vert <\varepsilon \label{eqn:74}\\ 
& & \limsup_{\tilde s\uparrow+\infty}\Vert A(\cdot, -\tilde s)\Vert_{L^\infty({\bf R}^2\setminus \bigcup_{j=1}^{m(\tilde s)}B(y^j(\tilde s), b_j(\tilde s)))}=0. \label{eqn:75}
\end{eqnarray} 
By (\ref{eqn:74})-(\ref{eqn:75}), we may assume that $m(\tilde s)$ is independent of $\tilde s$, denoted by $m$. 

From the proof of Lemma \ref{lem:3.1}, therefore, each $0<\varepsilon\ll 1$ admits $\tilde s_1>0$, and $y^j(\tilde s)\in {\bf R}^2$, $b_j(\tilde s)>0$ for $1\leq j\leq m$, $\tilde s\geq \tilde s_1$ such that (\ref{eqn:72}), (\ref{eqn:73}) with $m(\tilde s)=m$ and  
\begin{eqnarray*} 
& & \lim_{b\uparrow+\infty}\limsup_{k\rightarrow +\infty}\Vert z(\cdot, -\log \tilde s+s_k)\Vert_{L^\infty(B_b\setminus \bigcup_{j=1}^m B(\tilde s^{-1/2}y^j(\tilde s), \tilde s^{-1/2}b_j(\tilde s))} \leq \varepsilon \tilde s \\ 
& & \limsup_{k\rightarrow\infty}\left\vert \Vert z(\cdot,-\log \tilde s+s_k)\Vert_{L^1(B(\tilde s^{-1/2}y^j(\tilde s), \tilde s^{-1/2}b_j(\tilde s))} -8\pi \right\vert<\varepsilon, \quad 1\leq j\leq m. 
\end{eqnarray*} 
Then Theorem \ref{thm:2} is obtained.

\begin{flushright}
Takashi Suzuki \\ 
Division of Mathematical Science \\ 
Department of Systems Innovation \\ 
Graduate School of Engineering Science \\ 
Osaka University \\ 
Machikaneyamacho 1-3 \\ 
Toyonakashi, 560-8531, Japan \\ 
suzuki@sigmath.es.osaka-u.ac.jp 
\end{flushright} 

\end{document}